\documentclass{ws-sd}%
\usepackage{amsmath}%
\setcounter{MaxMatrixCols}{30}%
\usepackage{amsfonts}%
\usepackage{amssymb}%
\usepackage{graphicx}
\begin{document}

\title{INVARIANT MEASURES FOR RANDOM EXPANDING ON AVERAGE SAUSSOL MAPS}
\author{Fawwaz Batayneh$^{*}$ and Cecilia Gonz\'{a}lez-Tokman$^{\dagger}$}
\maketitle

\begin{abstract}
In this paper, we investigate the existence of random absolutely continuous
invariant measures (ACIP) for random expanding on average Saussol maps in
higher dimensions. This is done by the establishment of a random Lasota-Yorke
inequality for the transfer operators on the space of bounded oscillation. We
prove that the number of ergodic skew product ACIPs is finite and provide an
upper bound for the number of these ergodic ACIPs. This work can be seen as a
generalization of the work in \cite{BGT} on admissible random Jab\l o\'{n}ski
maps to a more general class of higher dimensional random maps.

\end{abstract}

\markboth{Fawwaz Batayneh and Cecilia Gonz\'{a}lez-Tokman}{Invariant measures for random expanding on average Saussol maps}

\address{School of Mathematics and Physics, The University of Queensland,\\ Saint Lucia, Queensland
4072, Australia\\
$^{*}$f.batayneh@uq.edu.au\\$^{\dagger}$cecilia.gt@uq.edu.au}

\begin{history}
\received{(19 June 2021)}
\revised{(09 September 2021)}
\accepted{(15 September 2021)}
\end{history}

\ccode{AMS Subject Classification: 37H15, 37C30}

\section{Introduction}

In this paper, we study a class of random expanding on average
multi-dimensional maps where randomness means, at each iteration of the
discrete process, one of a given family of maps is chosen and applied to
produce the next stage of the dynamics. The randomness is governed by an
external ergodic, invertible, probability preserving system $\sigma
:\Omega\rightarrow\Omega$ (quenched setting) where $(\Omega,\mathbb{P})$ is a
probability space, including but not restricted to the IID case, where maps
are chosen according to a stationary process, see \cite{ANV}. In real life
applications, the relevance of random dynamical systems is clear due to the
fact that systems are influenced by external factors or noise.

The long term behaviour of these random maps is unpredictable. Therefore, one
attempts to understand their statistical properties through random absolutely
continuous invariant measures (ACIPs), which are relevant to describe
physically observable events. Generally, there is no measure that is invariant
under all the maps at the same time. In this paper, we investigate the
existence and bound the number of ergodic skew product ACIPs for the so-called
random Saussol maps.

The statistical properties of multi-dimensional piecewise expanding maps are
significantly more complicated to analyse than those of one dimensional maps.
This is not simply because of technical difficulties, indeed there are
intrinsic obstacles. For example, there exist two-dimensional piecewise
expanding and $C^{2}$ maps with singular ergodic properties \cite{TS3}, and
examples with no or infinitely many absolutely continuous invariant measures
\cite{noorinf}.

In \cite{S},\ Saussol studied the statistical properties and existence of
absolutely continuous invariant measures with respect to the Lebesgue measure
for a general class of multi-dimensional piecewise expanding maps with
singularities. The author establishes a spectral gap in the transfer operators
associated to Saussol maps, defined in \cite[Section 2]{S}. The assumptions on
these maps naturally appear for maps with discontinuities on some wild sets.
The author proved the existence of a finite number ACIPs. Moreover, a key
property of the function space involved in \cite{S} gives a constructive upper
bound on the number of ergodic ACIPs.

In the literature, Saussol maps were studied by several researchers, among
many, we mention \cite{ANV,HV,Thomine}. In \cite{HV}, Hu and Vaienti treated a
class of nonsingular transformations with indifferent fixed points without the
assumption of any Markov property. They adapted Saussol's strategy to prove a
Lasota-Yorke inequality and obtained the existence of ACIPs that can be finite
or infinite. The random IID case of Saussol maps was covered by Aimino, Nicol
and Vaienti in \cite{ANV}. Their work is restricted to the case where the
partitions associated to the maps under consideration are finite.
Consequently, they were able to use the sufficient condition given in Lemma
$2.1$ in \cite{S} instead of the hypothesis described in (PE5) given in
Section $2$ in \cite{S}. They assumed the so-called random covering property
to show that their systems are mixing. In \cite{Thomine}, Thomine linked his
work and proved similar results to Saussol \cite{S} but using Sobolev spaces.

In a more recent work, in \cite{DFCV}, Dragi\v{c}evi\'{c}, Froyland,
Gonz\'{a}lez-Tokman and Vaienti defined admissible transfer operator cocycles.
They presented two classes of examples, one and higher dimensional piecewise
expanding maps. In the higher dimensional case, Saussol maps were used over
finite partitions with uniform constants. The same authors, in \cite{DFCV2},
proved a fiberwise almost sure invariance principle for a large class of
random dynamical systems. They also provided explicit examples of random
dynamical systems, including Saussol maps, and proved the existence of a
unique random ACIP. In \cite{TPS}, Tanzi, Pereira and van Strien studied
random compositions of small perturbations of dynamical systems modeled by
Saussol maps. Their main focus was to study when\ the compositions of
perturbations of a given map result in statistical behaviour close to that of
the map itself. Particularly, they proved that the evolution of sufficiently
regular mass distributions under the random perturbations stays close to the
mass distribution that is invariant under the perturbed map.

This work can be seen as a generalization of the work in \cite{BGT} on random
Jab\l o\'{n}ski maps where each component of the map only depends on its
corresponding variable. In this paper, we include maps such that the
components are allowed to depend on all or some of the variables. In
\cite{BGT}, the authors studied the quenched setting of random Jab\l o\'{n}ski
maps. They proved that the skew product associated to this random dynamical
system admits a finite number of ergodic ACIPs. Moreover, two different upper
bounds on the number of ergodic skew product ACIPs were provided. Those bounds
were motivated by the works of Buzzi \cite{B} on random Lasota-Yorke maps in
one dimension and G\'{o}ra, Boyarsky and Proppe \cite{GBP} on higher
dimensional deter. These bounds used the fact that Jab\l o\'{n}ski maps are
defined on rectangular partitions and the maps preserve rectangles. Such
bounds are not direct to adapt in our setting since we do not make any
assumption on the shape of partition and we do not assume any type of
geometric preservation under the maps. The bound we develop in \eqref{skewbd}
is based on an analytical observation that is nonnegative functions in the
space of bounded oscillation are strictly positive on nontrivial balls inside
their support.

In Theorem $2$ in \cite{GBP}, G\'{o}ra, Boyarsky and Proppe obtained a bound
on the number of ergodic ACIPs for piecewise $C^{2}$ Jab\l o\'{n}ski
transformations which are sufficiently expansive. In Remark $2.3$ in
\cite{liv}, Liverani gave a sufficient condition for the uniqueness of ACIP
using dense orbits. However, in spite of the fact that Batayneh and
Gonz\'{a}lez-Tokman \cite{BGT}, Buzzi \cite{B} and Araujo--Solano \cite{AS}
provided bounds on the number of ACIPs for random compositions of certain
classes of one and multi-dimensional maps, studying and, particularly,
bounding the number of ergodic ACIPs is still largely open problem. This
relates to questions about multiplicity of Lyapunov exponents in
multiplicative ergodic theory.

The plan of the paper is the following: in Section $2$, we provide background
materials regarding the function space involved and Oseledets splittings for
random dynamical systems. Section $3$ is devoted for defining random Saussol
maps. In Section $4$, we develop a random Lasota-Yorke Inequality and prove
the quasi-compact property. Section $5$ provides existence results of random
invariant and skew product ACIPs. In Section $6$, we provide an upper bound on
the number of ergodic skew product ACIPs for random Saussol maps. An example
is also provided.

\section{Background}

\subsection{The space of bounded oscillation}

There are several function spaces to use when studying transfer operators
induced by higher dimensional expanding maps. One of them is the space of
functions of bounded variation in higher dimension, see \cite{GBMULTI}.
Another alternative is fractional Sobolev spaces, as done in \cite{Thomine}.
In this paper, the analysis of the transfer operators of random Saussol maps
requires us to use a function space called the space of bounded oscillation.
This space was first introduced by Keller \cite{KE1} in one dimension,
developed by Blank \cite{BL} and used by Saussol \cite{S} and successively by
Buzzi \cite{B2} and Tsujii \cite{TS1}. Other references where this space was
used to provide densities of ACIPs are \cite{BK,HV}. In the rest of the paper,
$N>1$.

\begin{definition}
For a Borel subset $C$ of $%
\mathbb{R}
^{N}$ and $f\in L^{1}(%
\mathbb{R}
^{N})$, we define \emph{the oscillation of }$f$\emph{\ on }$C$ by%
\begin{equation}
osc(f,C):=\underset{C}{E\sup f}-\underset{C}{E\inf f}\text{,} \label{osc}%
\end{equation}
where $\underset{C}{E\sup f}$ is the essential supremum of $f$ on $C$ and
$\underset{C}{E\inf f}$ is the essential infimum of $f$ on $C$.
\end{definition}

For $x\in%
\mathbb{R}
^{N}$ and $\varepsilon>0$, denote the open ball of radius $\varepsilon$
centered at $x\ $by$\ B_{\varepsilon}(x)$. The mapping $x\mapsto
osc(f,B_{\varepsilon}(x))$ is lower semi-continuous by Proposition $3.1$ in
\cite{S} and hence measurable, one then can define the $\alpha$-seminorm of
$f$ (or the $\alpha$-oscillation of $f$).

\begin{definition}
\label{def:norm} \label{le copy(2)}Let $0<\alpha\leq1$ and $\tilde
{\varepsilon}_{0}>0$ be real numbers and $f\in L^{1}(%
\mathbb{R}
^{N})$. \emph{The }$\alpha$\emph{-seminorm of }$f$\emph{\ (or the }$\alpha
$\emph{-oscillation of }$f$\emph{)} is defined as%
\begin{equation}
\left\vert f\right\vert _{\alpha}:=\underset{\varepsilon\leq\tilde
{\varepsilon}_{0}}{\sup}\varepsilon^{-\alpha}\underset{%
\mathbb{R}
^{N}}{\int}osc(f,B_{\varepsilon}(x))dx\text{.} \label{bdedosc}%
\end{equation}
\end{definition}

\begin{definition}
Let $f\in L^{1}(C)$. We identify $f$ with its extension by zero to $%
\mathbb{R}
^{N}$. If $\left\vert f\right\vert _{\alpha}$ is a finite, then $f$ is said to
be \emph{of bounded oscillation on }$C$. The set of all such maps is denoted
by $V_{\alpha}$. For $f\in V_{\alpha}$, the norm of $f$ is defined by
$\left\Vert f\right\Vert _{\alpha}:=\left\Vert f\right\Vert _{L^{1}%
}+\left\vert f\right\vert _{\alpha}$.
\end{definition}

The space $V_{\alpha}$ is called the space of bounded oscillation (the
Quasi-H\"{o}lder space in \cite{S}). $V_{\alpha}$ is a Banach space and
compactly embedded in $L^{1}(C)$, see \cite{S,B2}. Note that while the norm
depends on $\tilde{\varepsilon}_{0}$, the space $V_{\alpha}$ does not, and two
choices of $\tilde{\varepsilon}_{0}$ give rise to two equivalent norms.\ By
Lemma $1$ in \cite{BK}, the space $V_{1}$ is a proper subset of the space of
bounded variation $(BV(C),\Vert\cdot\Vert_{BV})$ in the sense of Definition
$1.1$ in \cite{G}, indeed $\left\Vert f\right\Vert _{BV}\leq2N^{\frac{3}{2}%
}\left\Vert f\right\Vert _{1}$. A crucial property of this space is, in higher
dimensions, functions of $V_{\alpha}$ are bounded \cite[Proposition $3.4$]{S}
but functions of $BV(C)$ are not bounded in general.

The oscillation defined in Equation \eqref{osc} satisfies some properties
listed in Proposition $3.2$ in \cite{S}. We use these properties when we
develop the Lasota-Yorke inequality in Theorem \ref{quco}. We recall this
proposition as well as Lemma $3.1$ from \cite{S} which gives one of the key
properties of nonnegative functions of the space of bounded oscillation
$V_{\alpha}$.

\begin{proposition}
[{\cite[Proposition $3.2$]{S}}]\label{properties}Let $f,f_{i},g\in L^{\infty}(%
\mathbb{R}
^{N})$, $g$ be positive function, $a,b,c>0$ and $K$ be Borel subset of $%
\mathbb{R}
^{N}$. The oscillation has the following properties:
(i) $osc(%
{\displaystyle\sum\limits_{i}}
f_{i},B_{a}(\cdot))\leq%
{\displaystyle\sum\limits_{i}}
osc(f_{i},B_{a}(\cdot))$.
(ii) $osc(f1_{K},B_{a}(\cdot))\leq osc(f,K\cap B_{a}(\cdot))1_{K}%
(\cdot)+2[\underset{K\cap B_{a}(\cdot)}{E\sup}\left\vert f\right\vert
]1_{B_{a}(K)\cap B_{a}(K^{c})}(\cdot)$, where $B_{a}(K):=\{x:$ $d(x,K)<a\}$
and $d$ is the Euclidean metric and $K^{c}$ is the complement of $K$.
(iii) $osc(fg,K)\leq osc(f,K)\underset{K}{E\sup}g+osc(g,K)\underset{K}{E\inf
}\left\vert f\right\vert $.
(iv) If $a+b\leq c$, then for all $x\in%
\mathbb{R}
^{N}$ we have
\[
\underset{B_{a}(x)}{E\sup}f\leq\frac{1}{m(B_{b}(x))}\underset{B_{b}(x)}{\int
}\Big(f(z)+osc(f,B_{c}(z))\Big)dz.
\]
\end{proposition}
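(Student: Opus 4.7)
My plan is to treat the four items separately, working directly from the definitions of $E\sup$ and $E\inf$ together with the oscillation formula \eqref{osc}. For (i), subadditivity of the essential supremum and superadditivity of the essential infimum on each ball $B_a(x)$, applied and then subtracted, give the bound immediately.

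For (iii), I would start from the identity
\[
f(x)g(x)-f(y)g(y)=\bigl(f(x)-f(y)\bigr)g(x)+f(y)\bigl(g(x)-g(y)\bigr),
\]
which, upon taking essential suprema over pairs $x,y\in K$, reduces the proof to bounding each summand. The first is controlled by $osc(f,K)\,\underset{K}{E\sup}\,g$, and, using positivity of $g$ together with a brief sign analysis, the coefficient $f(y)$ in the second summand can be replaced by $\underset{K}{E\inf}\,|f|$, producing the claimed bound.

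For (ii), I would partition according to the relative position of $x$ with respect to $K$ and $K^c$. When $B_a(x)\subset K$, the product $f1_K$ coincides with $f$ on $B_a(x)$ and the oscillation reduces to $osc(f,K\cap B_a(x))$; when $B_a(x)\subset K^c$, the product vanishes identically on $B_a(x)$ and the oscillation is zero. In the remaining cases, $B_a(x)$ meets both $K$ and $K^c$, which forces $x\in B_a(K)\cap B_a(K^c)$, and the jump of $f1_K$ across $\partial K$ contributes an extra term of at most $2\,\underset{K\cap B_a(x)}{E\sup}|f|$ to the oscillation, yielding the second summand in the stated bound.

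For (iv), the geometric core is that $a+b\le c$ together with the triangle inequality gives $B_a(x)\subset B_c(z)$ for every $z\in B_b(x)$, hence $\underset{B_a(x)}{E\sup}\,f\le \underset{B_c(z)}{E\sup}\,f$ for each such $z$. Using the standard Lebesgue-density fact that $f(z)\ge \underset{B_c(z)}{E\inf}\,f$ for almost every $z$, I can replace the essential supremum on the right by $f(z)+osc(f,B_c(z))$; integrating over $z\in B_b(x)$ and dividing by $m(B_b(x))$ delivers the averaging inequality. The main technical hurdle lies precisely here, in rigorously justifying the almost-everywhere passage from the essential infimum of $f$ on $B_c(z)$ to the pointwise value $f(z)$, which requires a careful Fubini or covering argument, while the remaining items are essentially algebraic consequences of the definition of oscillation.
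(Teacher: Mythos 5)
The paper itself offers no proof of this proposition: it is imported verbatim from Saussol \cite[Proposition 3.2]{S} and used as a black box, so your attempt can only be judged on its own merits. Items (i), (ii) and (iv) of your plan are sound. For (i), sub/superadditivity of the essential supremum/infimum is exactly right; for (ii), your three-way split according to whether $B_{a}(x)$ essentially lies in $K$, in $K^{c}$, or meets both is the standard argument (in the mixed case one has $osc(f1_{K},B_{a}(x))=\max(M,0)-\min(m,0)\leq2\underset{K\cap B_{a}(x)}{E\sup}\left\vert f\right\vert$ with $M,m$ the essential extrema of $f$ on $K\cap B_{a}(x)$); and for (iv), the inclusion $B_{a}(x)\subset B_{c}(z)$ for $z\in B_{b}(x)$ plus the fact that $f(z)\geq\underset{B_{c}(z)}{E\inf}f$ for a.e.\ $z$ does the job --- the ``technical hurdle'' you flag is only the Lebesgue density theorem applied to the countably many level sets $\{f<q\}$, $q\in\mathbb{Q}$, so it is routine.

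The genuine gap is in (iii). You propose to bound the two summands of $f(x)g(x)-f(y)g(y)=(f(x)-f(y))g(x)+f(y)(g(x)-g(y))$ separately, with the second controlled by $osc(g,K)\underset{K}{E\inf}\left\vert f\right\vert$. That term-by-term bound is false. Take $K=A\cup B$ with $f=1$, $g=100$ on $A$ and $f=10$, $g=1$ on $B$: the essential supremum of $(fg)(x)-(fg)(y)$ is attained with $x\in A$, $y\in B$, where the second summand equals $f(y)(g(x)-g(y))=10\cdot99=990$, whereas $osc(g,K)\underset{K}{E\inf}\left\vert f\right\vert=99\cdot1=99$. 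No ``brief sign analysis'' repairs a pointwise, summand-by-summand estimate, because the coefficient multiplying $g(x)-g(y)$ is the value of $f$ at one particular point, which can sit anywhere between $\underset{K}{E\inf}\left\vert f\right\vert$ and $\underset{K}{E\sup}\left\vert f\right\vert$ (and switching to the symmetric decomposition with $g(y)$ and $f(x)$ fails on an analogous example). The correct proof is global, not pointwise: writing $M_{f}=\underset{K}{E\sup}f$, $m_{f}=\underset{K}{E\inf}f$ and likewise for $g$, one uses $g>0$ and a case analysis on the sign of $f$. If $f\geq0$ a.e.\ on $K$, then $\underset{K}{E\sup}(fg)\leq M_{f}M_{g}$ and $\underset{K}{E\inf}(fg)\geq m_{f}m_{g}$, so $osc(fg,K)\leq M_{f}M_{g}-m_{f}m_{g}=(M_{f}-m_{f})M_{g}+m_{f}(M_{g}-m_{g})$ with $m_{f}=\underset{K}{E\inf}\left\vert f\right\vert$; if $f\leq0$ a.e.\ one gets $osc(fg,K)\leq M_{f}m_{g}-m_{f}M_{g}=(M_{f}-m_{f})M_{g}+(-M_{f})(M_{g}-m_{g})$ with $-M_{f}=\underset{K}{E\inf}\left\vert f\right\vert$; and if $f$ changes sign then already $osc(fg,K)\leq M_{f}M_{g}-m_{f}M_{g}=(M_{f}-m_{f})M_{g}$, which suffices since the remaining term is nonnegative. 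You should replace your decomposition for (iii) by this argument.
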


\begin{lemma}
[{\cite[Lemma $3.1$]{S}}]\label{ph copy(1)}For every positive $h\in V_{\alpha
}$, $h\neq0$, there exists a ball on which the infimum of $h$ is strictly
positive. The radius $\varepsilon$ of the ball can be taken as%
\begin{equation}
\varepsilon=\min(\tilde{\varepsilon}_{0},\Big(\frac{\int hdm}{\left\vert
h\right\vert _{\alpha}}\Big)^{\frac{1}{\alpha}})\text{,}%
\end{equation}
where $\tilde{\varepsilon}_{0}$ as of Definition \eqref{le copy(2)}.
\end{lemma}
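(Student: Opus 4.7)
The plan is to argue by contradiction. I would suppose that for every $x\in\mathbb{R}^N$ one has $\underset{B_\varepsilon(x)}{E\inf}\,h=0$, where $\varepsilon=\min(\tilde{\varepsilon}_0,(\int h\,dm/|h|_\alpha)^{1/\alpha})$, and aim to derive a strict inequality incompatible with this choice of $\varepsilon$ (approaching $\varepsilon$ from below if necessary to handle the equality case).

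First I would exploit the hypothesis $h\geq 0$: under the contradiction assumption the identity $osc(h,B_\varepsilon(x))=\underset{B_\varepsilon(x)}{E\sup}\,h-\underset{B_\varepsilon(x)}{E\inf}\,h$ collapses to $osc(h,B_\varepsilon(x))=\underset{B_\varepsilon(x)}{E\sup}\,h$ pointwise in $x$. So the integral of the oscillation equals the integral of the essential supremum function.

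Next I would produce the key lower bound $\int_{\mathbb{R}^N} \underset{B_\varepsilon(x)}{E\sup}\,h\,dx\ \geq\ \int h\,dm$. This comes from Fubini: for a.e. $y\in B_\varepsilon(x)$ one has $h(y)\leq \underset{B_\varepsilon(x)}{E\sup}\,h$ by definition of the essential supremum, so swapping the order of integration against $\mathbf{1}_{B_\varepsilon(x)}(y)=\mathbf{1}_{B_\varepsilon(y)}(x)$ gives
\[
m(B_\varepsilon)\int h\,dm\ \leq\ \int_{\mathbb{R}^N}\underset{B_\varepsilon(x)}{E\sup}\,h\cdot m(B_\varepsilon(x))\,dx,
\]
and dividing by the common volume of the ball yields the claim. (Alternatively, property (iv) of Proposition \ref{properties} gives a related averaged bound.)

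Combining these two steps gives $\int h\,dm\leq \int osc(h,B_\varepsilon(\cdot))\,dx\leq \varepsilon^\alpha|h|_\alpha$, where the last inequality uses $\varepsilon\leq\tilde{\varepsilon}_0$ together with the definition \eqref{bdedosc} of the $\alpha$-seminorm. This forces $\varepsilon^\alpha\geq \int h\,dm/|h|_\alpha$, contradicting the strict form of our choice of $\varepsilon$. The main technical obstacle I anticipate is the borderline case where $\varepsilon$ equals $(\int h\,dm/|h|_\alpha)^{1/\alpha}$ exactly: here I would argue first for every $\varepsilon'<\min(\tilde{\varepsilon}_0,(\int h\,dm/|h|_\alpha)^{1/\alpha})$, producing a center $x_{\varepsilon'}$ with $\underset{B_{\varepsilon'}(x_{\varepsilon'})}{E\inf}\,h>0$, and then pass to the limit by using lower semi-continuity of $osc(h,B_\varepsilon(\cdot))$ (cited from Proposition $3.1$ of \cite{S}) and the fact that a nonnegative essential infimum on a slightly smaller ball transfers into strict positivity on a ball of the prescribed radius. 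This also secures the assertion as stated.
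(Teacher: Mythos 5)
Your main line of argument is precisely the one behind \cite[Lemma 3.1]{S}; the present paper does not reprove this lemma but simply cites it, so the relevant comparison is with Saussol's original proof, which you have essentially reconstructed. The three core steps are correct: negating the conclusion and using $h\geq0$ to collapse $osc(h,B_{\varepsilon}(x))$ to $\underset{B_{\varepsilon}(x)}{E\sup}h$; the Fubini/averaging bound $\int_{\mathbb{R}^{N}}\underset{B_{\varepsilon}(x)}{E\sup}h\,dx\geq\int h\,dm$; and the upper bound $\int_{\mathbb{R}^{N}}osc(h,B_{\varepsilon}(x))\,dx\leq\varepsilon^{\alpha}\left\vert h\right\vert_{\alpha}$ from \eqref{bdedosc}, valid because $\varepsilon\leq\tilde{\varepsilon}_{0}$.

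The gap is in your handling of the borderline case $\varepsilon=\big(\int h\,dm/\left\vert h\right\vert_{\alpha}\big)^{1/\alpha}\leq\tilde{\varepsilon}_{0}$, where the chain only yields $\int h\,dm\leq\varepsilon^{\alpha}\left\vert h\right\vert_{\alpha}=\int h\,dm$ and hence no contradiction. Your proposed repair---prove the statement for every $\varepsilon'<\varepsilon$ and then assert that ``a nonnegative essential infimum on a slightly smaller ball transfers into strict positivity on a ball of the prescribed radius''---fails: essential infima \emph{decrease} as the ball grows, so $\underset{B_{\varepsilon'}(x)}{E\inf}h>0$ gives no information about $\underset{B_{\varepsilon}(x)}{E\inf}h$ (consider $h=1_{B_{\varepsilon'}(0)}$), and lower semicontinuity of $x\mapsto osc(h,B_{\varepsilon}(x))$ controls $E\sup-E\inf$, not $E\inf$. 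A correct way to close the case is a rigidity argument: under the contradiction hypothesis all inequalities in
\begin{equation*}
\int h\,dm\leq\underset{\mathbb{R}^{N}}{\int}\underset{B_{\varepsilon}(x)}{E\sup}h\,dx=\underset{\mathbb{R}^{N}}{\int}osc(h,B_{\varepsilon}(x))\,dx\leq\varepsilon^{\alpha}\left\vert h\right\vert_{\alpha}=\int h\,dm
\end{equation*}
become equalities, so $\underset{B_{\varepsilon}(x)}{E\sup}h=\frac{1}{m(B_{\varepsilon}(x))}\int_{B_{\varepsilon}(x)}h\,dm$ for a.e.\ $x$, forcing $h$ to coincide a.e.\ with a constant on a.e.\ ball of radius $\varepsilon$, hence (by overlapping balls and connectedness of $\mathbb{R}^{N}$) to be a.e.\ constant, hence $h=0$ a.e.\ since $h\in L^{1}(\mathbb{R}^{N})$, contradicting $h\neq0$. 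Alternatively, for the only application in this paper, Theorem \ref{ph copy(2)}, balls of radius $\varepsilon'<\varepsilon$ arbitrarily close to $\varepsilon$ would suffice, since the packing bound there passes to the limit; but as stated your limiting step is not valid.
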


\subsection{Random dynamical systems and Oseledets splittings}

\begin{definition}
\label{RSM}A \emph{random dynamical system} is a tuple $\mathcal{R}%
=(\Omega,\mathcal{F},\mathbb{P},\sigma,\mathcal{X},\mathcal{L}\mathbb{)}$,
where the base $\sigma$ is an invertible measure-preserving transformation of
the probability space $(\Omega,\mathcal{F},\mathbb{P)}$, $(\mathcal{X}%
,\Vert\cdot\Vert)$ is a Banach space and $\mathcal{L}:\Omega
\mathcal{\rightarrow}L(\mathcal{X},\mathcal{X})$ is a family of bounded linear
maps of $\mathcal{X}$, called the generator.
\end{definition}

A key regularity notion in this work is the concept of $\mathbb{P}$-continuity
which was first introduced by Thieullen in \cite{Thieullen}. We will apply
this in Corollary \ref{coro1}.

\begin{definition}
For a topological space $\Omega$, equipped with a Borel probability
$\mathbb{P}$, a mapping $\mathcal{L}$ from $\Omega$ to a topological space $Y$
is said to be $\mathbb{P}$\emph{-continuous} if $\Omega$ can be expressed as a
countable union of Borel sets such that the restriction of $\mathcal{L}$ to
each of them is continuous.
\end{definition}

For convenience, we let $\mathcal{L}_{\omega}:=\mathcal{L}(\omega)$ be the
transfer operator defined in \eqref{ohdpoh}. A random dynamical system defines
a cocycle, given by
\begin{equation}
(k,\omega)\mapsto\mathcal{L}_{\omega}^{(k)}:=\mathcal{L}_{\sigma^{k-1}\omega
}\circ\dots\circ\mathcal{L}_{\sigma\omega}\circ\mathcal{L}_{\omega}\text{.}
\label{toc}%
\end{equation}

Multiplicative ergodic theorems deal with random dynamical systems
$\mathcal{R}=(\Omega,\mathcal{F},$ $\mathbb{P},\sigma,\mathcal{X}%
,\mathcal{L}\mathbb{)}$.
They give rise to Oseledets splittings or decompositions of $\mathcal{X}$ that
depend on $\omega$. We apply the Oseledets splitting theorem for $\mathbb{P}%
$-continuous random dynamical systems \cite{FLQ} to show that the random
invariant densities $h_{\omega}$ given in Theorem~\ref{muer} belong to the
first Oseledets subspace. In Corollary \ref{coro1}, we use the finite
dimensionality of the first Oseledets subspace to show that the number of
ergodic ACIPs is finite.

\begin{definition}
Let $\mathcal{R}=(\Omega,\mathcal{F},\mathbb{P},\sigma,\mathcal{X}%
,\mathcal{L}\mathbb{)}$ be a random dynamical system. \emph{An Oseledets
splitting for }$\mathcal{R}$ consists of a sequence of isolated (exceptional)
Lyapunov exponents
\[
\infty>\lambda^{\ast}=\lambda_{1}>\lambda_{2}>...>\lambda_{l}>\mathcal{K}%
^{\ast}\geq-\infty\text{,}%
\]
where the index $l$ $\geq1$ is allowed to be finite or countably infinite, and
a family of $\omega$-dependent splittings,%
\begin{equation}
\mathcal{X}=Y_{1}(\omega)\oplus...\oplus Y_{l}(\omega)\oplus V(\omega)\text{,}
\label{y1}%
\end{equation}
where for $j=1,...,l$, $d_{j}:=\dim(Y_{j}(\omega))<\infty$ and $V(\omega
)\in\mathcal{G(X)},$ where $\mathcal{G(X)}$ is the Grassmannian of
$\mathcal{X}$.
For all $j=1,...,l$ and $\mathbb{P}$-a.e. $\omega\in\Omega$ we have%
\begin{align*}
\mathcal{L}_{\omega}Y_{j}(\omega)  &  =Y_{j}(\sigma\omega)\text{,}\\
\mathcal{L}_{\omega}V(\omega)  &  \subseteq V(\sigma\omega)\text{,}%
\end{align*}
and
\begin{align}
\lim_{s\rightarrow\infty}\frac{1}{s}\log\left\Vert \mathcal{L}_{\omega}%
^{(s)}y\right\Vert  &  =\lambda_{j}\text{, }\forall y\in Y_{j}(\omega
)\backslash\{0\}\text{,}\label{keke}\\
\lim_{s\rightarrow\infty}\frac{1}{s}\log\left\Vert \mathcal{L}_{\omega}%
^{(s)}v\right\Vert  &  \leq\mathcal{K}^{\ast}\text{, }\forall v\in
V(\omega)\text{.}\nonumber
\end{align}
\end{definition}

\section{Random Saussol maps}

\begin{definition}
\label{le copy(1)} Let $(\Omega,\mathcal{F},\mathbb{P})$ be a probability
space and $\sigma:\Omega\circlearrowleft$ an invertible, ergodic and
$\mathbb{P-}$preserving transformation. Let $C$ be a compact subset of $%
\mathbb{R}
^{N}$, with $clos(int(C))\neq\phi$. A \emph{random Saussol map }%
$T$\emph{\ over }$\sigma$ is a map $T:\Omega\rightarrow\{T_{\omega}%
\}_{\omega\in\Omega}$, where $T_{\omega}:=T(\omega):C\circlearrowleft$ such
that there exists an at most countable family of disjoint open
sets \footnote{The sets $U_{i}$ and $V_{i}$ may also depend on $\omega$.
However, we do not make this dependence explicit, unless it becomes relevant for the discussion.} $U_i\subset C$ and $V_i$ where $clos(U_i)\subset V_i$
for all $i$ in an indexing set $\mathcal{I}$, and maps
\[
T_{\omega,i}:V_{i}\rightarrow%
\mathbb{R}
^{N}\text{,}%
\]
satisfying for some $0<\alpha\leq1$ and small enough $\varepsilon_0>0:$
\textbf{(PE1)} There exists $S>0,$ such that for $\mathbb{P}$-a.e. $\omega
\in\Omega$, $0<s(\omega)<S$ and for all $i\in\mathcal{I}$ and $u,v\in
T_{\omega}V_{i}$ such that $d(u,v)\leq\varepsilon_{0}$, we have
\begin{equation}
d(T_{\omega,i}^{-1}u,T_{\omega,i}^{-1}v)\leq s(\omega)d(u,v)\text{,}
\label{entaayah}%
\end{equation}
and
\begin{equation}
\int_{\Omega}\log(s(\omega))d\mathbb{P(\omega)<}0\text{.} \label{ec}%
\end{equation}
\textbf{(PE2)} For $\mathbb{P}$-a.e. $\omega\in\Omega$ and all $i$,
$T_{\omega}\mid_{U_{i}}=T_{\omega,i}\mid_{U_{i}}$ and $T_{\omega,i}%
(V_{i})\supset B_{\varepsilon_{0}}(T_{\omega,i}(U_{i}))$.
\textbf{(PE3)} For $\mathbb{P}$-a.e. $\omega\in\Omega$ and all $i$,
$T_{\omega,i}\in C^{1}(V_{i})$ and $T_{\omega,i}$ is injective and
$T_{\omega,i}^{-1}\in C^{1}(T_{\omega,i}V_{i})$. Moreover, the determinant is
uniformly H\"{o}lder: for $\mathbb{P}$-a.e. $\omega\in\Omega$ and all $i$,
$\varepsilon\leq\varepsilon_{0}$, $z\in T_{\omega,i}V_{i}$ and $x,y\in
B_{\varepsilon}(z)\cap T_{\omega,i}V_{i}$, we have%
\begin{equation}
\left\vert \det DT_{\omega,i}^{-1}x-\det DT_{\omega,i}^{-1}y\right\vert \leq
c\left\vert \det DT_{\omega,i}^{-1}z\right\vert \varepsilon^{\alpha}\text{,}
\label{detalphaholder}%
\end{equation}
for some $c>0$.
\textbf{(PE4)} For $\mathbb{P}$-a.e. $\omega\in\Omega$, $m(C\backslash
\underset{i\in\mathcal{I}}{\cup}U_{i})=0$.
\textbf{(PE5)} For $\mathbb{P}$-a.e. $\omega\in\Omega$ and $\varepsilon>0$,
let
\begin{equation}
G_{\omega,\varepsilon_{0}}(\varepsilon):=\underset{x\in C}{\sup}\text{
}G_{\omega,\varepsilon_{0}}(x,\varepsilon)\text{,} \label{G}%
\end{equation}
where%
\begin{equation}
G_{\omega,\varepsilon_{0}}(x,\varepsilon):=%
{\displaystyle\sum\limits_{i\in\mathcal{I}}}
\frac{m\Big(T_{\omega,i}^{-1}B_{\varepsilon}(\partial T_{\omega}U_{i})\cap
B_{(S-s(\omega))\varepsilon_{0}}(x)\Big)}{m(B_{(S-s(\omega))\varepsilon_{0}%
}(x))}\text{.} \label{gg}%
\end{equation}
For $\mathbb{P}$-a.e. $\omega\in\Omega$, define $\zeta_{\varepsilon_{0}%
}(\omega)$ by%
\begin{equation}
\zeta_{\varepsilon_{0}}(\omega):=s(\omega)^{\alpha}+2\underset{\varepsilon
\leq\varepsilon_{0}}{\sup}\frac{G_{\omega,\varepsilon_{0}}(\varepsilon
)}{\varepsilon^{\alpha}}(S\varepsilon_{0})^{\alpha}\text{,} \label{kekekek}%
\end{equation}
and assume that%
\begin{equation}
\int_{\Omega}\log(\zeta_{\varepsilon_{0}}(\omega))d\mathbb{P(\omega
)<}0\text{.} \label{iugb}%
\end{equation}
In addition, we assume the mapping $\omega\mapsto\mathcal{L}_{\omega}$ is
$\mathbb{P}$-continuous. For $k\in%
\mathbb{N}
$, the $k$-fold composition $T_{\omega}^{(k)}$ is defined as\
\begin{equation}
T_{\omega}^{(k)}=T_{\sigma^{k-1}\omega}\circ...\circ T_{\sigma\omega}\circ
T_{\omega}\text{.} \label{kokoko}%
\end{equation}
\end{definition}

For simplicity, we sometimes refer to the range of $T$, that is $\{T_{\omega
}\}_{\omega\in\Omega}$, as the random Saussol map. A random Saussol map gives
rise to a random dynamical system, where $\mathcal{X}=V_{\alpha}$ and
$\mathcal{L}_{\omega}=\mathcal{L}_{T_{\omega}}$ is the transfer operator
defined by
\begin{equation}
\mathcal{L}_{\omega}f=%
{\displaystyle\sum\limits_{i\in\mathcal{I}}}
(g_{\omega}f)\circ T_{\omega,i}{}^{-1}1_{T_{\omega}U_{i}}\text{,}
\label{ohdpoh}%
\end{equation}
where%
\[
g_{\omega}:=\frac{1}{\left\vert \det DT_{\omega}\right\vert }\text{.}%
\]

This can be seen as the expanding on average random version of the
deterministic maps studied by Saussol in \cite{S}.

In the above definition, we ensure that $C$ and the $U_{i}$'s do not need to
be connected and no control on the angles between smooth elements of the
partition like the ones in \cite{GBMULTI,C}. The family of the $U_{i}$'s does
not need to be finite.

\begin{remark}
For the random Saussol map $T=\{T_{\omega}\}_{\omega\in\Omega}$, the skew
product map $F$ on $\Omega\times C$ which encodes the whole dynamics of the
system is given by%
\begin{equation}
F(\omega,x)=(\sigma\omega,T_{\omega}(x))\text{. } \label{sp}%
\end{equation}
\end{remark}

\section{Lasota-Yorke inequality and quasi-compactness}

In this section, we establish a one step Lasota-Yorke inequality on the space
of bounded oscillation $V_{\alpha}$. This inequality is used to show the
quasi-compactness property in Corollary \ref{quasicom}.

\begin{definition}
Let $A:\mathcal{X}\circlearrowleft$ be a bounded linear map. \emph{The index
of compactness norm of }$A$ is
\[
\left\Vert A\right\Vert _{ic(\mathcal{X})}=\inf\{r>0:A(B_{\mathcal{X}})\text{
can be covered by finitely many balls of radius }r\}\text{,}%
\]
where $B_{\mathcal{X}}$ is the unit ball in $\mathcal{X}$.
\end{definition}

\begin{definition}
\label{le} Let $\mathcal{R}$ $=$ $(\Omega,\mathcal{F},\mathbb{P}%
,\sigma,\mathcal{X},\mathcal{L})$ be a random dynamical system such that
$\int_{\Omega}\log^{+}\left\Vert \mathcal{L}_{\omega}\right\Vert
d\mathbb{P(\omega)<\infty}.$ Let $\omega\in\Omega,$ \emph{the maximal Lyapunov
exponent }$\lambda(\omega)$ is%
\[
\lambda(\omega)=\lim_{k\rightarrow\infty}\frac{1}{k}\log\left\Vert
\mathcal{L}_{\omega}^{(k)}\right\Vert \text{,}%
\]
and \emph{the index of compactness }$\mathcal{K}(\omega)$ is%
\[
\mathcal{K}(\omega)=\lim_{k\rightarrow\infty}\frac{1}{k}\log\left\Vert
\mathcal{L}_{\omega}^{(k)}\right\Vert _{ic(\mathcal{X})}\text{,}%
\]
whenever these limits exist.
\end{definition}

We recall the following remark from \cite{CQ}.

\begin{remark}
\label{re4}In Definition \ref{le}, if $\sigma$ is ergodic, then $\lambda$ and
$\mathcal{K}$ are constant $\mathbb{P}-$almost everywhere. We denote these
constants by $\lambda^{\ast}$ and $\mathcal{K}^{\ast}$. By definition, we have
that $\mathcal{K}^{\ast}\leq\lambda^{\ast}$. The finiteness of $\lambda^{\ast
}$ is implied by the assumption that $\int_{\Omega}\log^{+}\left\Vert
\mathcal{L}_{\omega}\right\Vert d\mathbb{P(\omega)<}\infty$.
\end{remark}

\begin{definition}
A random dynamical system $\mathcal{R}$ with an ergodic base $\sigma$ is
called \emph{quasi-compact} if $\mathcal{K}^{\ast}<\lambda^{\ast}$.
\end{definition}

\begin{theorem}
\label{quco}Let $T=\{T_{\omega}\}_{\omega\in\Omega}$ be a random Saussol map.
If $\varepsilon_{0}$ in Definition~\ref{def:norm} is small enough, then there
are positive measurable functions $\eta,D:\Omega\rightarrow%
\mathbb{R}
^{+}$ such that $\int_{\Omega}\log(\eta(\omega))d\mathbb{P(\omega)<}0$ and
\[
\left\Vert \mathcal{L}_{\omega}f\right\Vert _{\alpha}\leq\eta(\omega
)\left\Vert f\right\Vert _{\alpha}+D(\omega)\left\Vert f\right\Vert _{L^{1}%
}\text{,}%
\]
for $\mathbb{P}$-a.e. $\omega\in\Omega$ and $f\in V_{\alpha}$.
\end{theorem}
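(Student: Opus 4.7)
The plan is to adapt Saussol's deterministic Lasota--Yorke argument from \cite{S} to the random setting, tracking how each constant depends on $\omega$ so that the final contraction coefficient $\eta(\omega)$ is essentially $\zeta_{\varepsilon_{0}}(\omega)$ (plus a small correction that can be absorbed once $\varepsilon_{0}$ is taken sufficiently small), thereby inheriting the integrability \eqref{iugb} from hypothesis (PE5).

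First I would fix $\omega$, take $\varepsilon \le \varepsilon_{0}$ and $x \in \mathbb{R}^{N}$, and estimate $osc(\mathcal{L}_{\omega} f, B_{\varepsilon}(x))$ by plugging the definition \eqref{ohdpoh} into Proposition~\ref{properties}(i), which bounds it by a sum over branches $i \in \mathcal{I}$ of $osc\bigl((g_{\omega} f) \circ T_{\omega,i}^{-1} \cdot 1_{T_{\omega} U_{i}}, B_{\varepsilon}(x)\bigr)$. Each summand is then split via Proposition~\ref{properties}(ii) into an \emph{interior} piece, controlled by $osc\bigl((g_{\omega} f) \circ T_{\omega,i}^{-1}, T_{\omega} U_{i} \cap B_{\varepsilon}(x)\bigr) 1_{T_{\omega} U_{i}}(x)$, and a \emph{boundary} piece, controlled by twice the essential supremum of $|(g_{\omega} f) \circ T_{\omega,i}^{-1}|$ on the intersection times the indicator of a neighborhood of $\partial T_{\omega} U_{i}$.

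For the interior piece I would apply Proposition~\ref{properties}(iii) to the product $f \circ T_{\omega,i}^{-1} \cdot g_{\omega} \circ T_{\omega,i}^{-1}$, use (PE1) to pull the ball $B_{\varepsilon}(x)$ back through $T_{\omega,i}^{-1}$ into a set of diameter at most $s(\omega)\varepsilon$, and invoke (PE3) (the uniform H\"older condition on $\det DT_{\omega,i}^{-1}$) to replace the oscillation of $g_{\omega} \circ T_{\omega,i}^{-1}$ by $c\,|g_{\omega} \circ T_{\omega,i}^{-1}(z)|\,\varepsilon^{\alpha}$. Integrating in $x$, changing variables to the domain side, and dividing by $\varepsilon^{\alpha}$ produces a term of the form $s(\omega)^{\alpha} |f|_{\alpha}$ plus a benign multiple of $\|f\|_{L^{1}}$ from the H\"older correction. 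For the boundary piece I would use Proposition~\ref{properties}(iv) with radii adjusted to $(S-s(\omega))\varepsilon_{0}$ to trade the essential supremum of $(g_{\omega} f) \circ T_{\omega,i}^{-1}$ for an integral over a slightly bigger ball of $(g_{\omega} f) \circ T_{\omega,i}^{-1}$ plus its oscillation; summing over $i$ and using (PE5), the indicator of the $\varepsilon$-neighborhood of $\partial T_{\omega} U_{i}$ pulled back through $T_{\omega,i}^{-1}$ contributes exactly $G_{\omega,\varepsilon_{0}}(\varepsilon)$, yielding a coefficient of the form $2 G_{\omega,\varepsilon_{0}}(\varepsilon)\varepsilon^{-\alpha}(S\varepsilon_{0})^{\alpha}$ in front of $|f|_{\alpha}$, together with a further $\|f\|_{L^{1}}$ contribution.

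Combining the two pieces, taking supremum over $\varepsilon \le \varepsilon_{0}$ and recognizing the result as $\zeta_{\varepsilon_{0}}(\omega) |f|_{\alpha}$ (up to an error that vanishes as $\varepsilon_{0} \to 0$) gives
\[
|\mathcal{L}_{\omega} f|_{\alpha} \le \bigl(\zeta_{\varepsilon_{0}}(\omega) + r(\omega, \varepsilon_{0})\bigr) |f|_{\alpha} + D(\omega)\|f\|_{L^{1}},
\]
where $r(\omega, \varepsilon_{0}) \to 0$ in an $\omega$-uniform or dominated way as $\varepsilon_{0} \to 0$. Setting $\eta(\omega) := \zeta_{\varepsilon_{0}}(\omega) + r(\omega, \varepsilon_{0}) + \text{const}\cdot\|\mathcal{L}_{\omega}\|_{L^{1}\to L^{1}}$-adjustment coming from the $\|f\|_{L^{1}}$ part of the norm $\|f\|_{\alpha} = \|f\|_{L^{1}} + |f|_{\alpha}$, and using $\|\mathcal{L}_{\omega}f\|_{L^{1}} \le \|f\|_{L^{1}}$, yields the claimed inequality. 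The main obstacle is the last step: ensuring that $\int \log \eta(\omega)\,d\mathbb{P}(\omega) < 0$. Here I would invoke \eqref{iugb} from (PE5) together with a dominated/monotone convergence argument as $\varepsilon_{0} \downarrow 0$, using that $r(\omega, \varepsilon_{0})$ is bounded above by an integrable function and $\zeta_{\varepsilon_{0}}(\omega)$ is monotone (or at least dominated) in $\varepsilon_{0}$, so that the strict negativity of $\int \log \zeta_{\varepsilon_{0}} d\mathbb{P}$ persists for $\eta$ once $\varepsilon_{0}$ is small enough. Measurability of $\eta$ and $D$ follows from $\mathbb{P}$-continuity of $\omega \mapsto \mathcal{L}_{\omega}$ imposed at the end of Definition~\ref{le copy(1)} combined with lower semicontinuity of $G_{\omega,\varepsilon_{0}}$ and $s(\omega)$.
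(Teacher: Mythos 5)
Your proposal follows essentially the same route as the paper's proof: the same decomposition via Proposition~\ref{properties}(i)--(ii) into an interior and a boundary piece, the same use of (PE1), (PE3) and Proposition~\ref{properties}(iii)--(iv) with radii $a=s(\omega)\varepsilon$, $b=(S-s(\omega))\varepsilon_{0}$, $c=S\varepsilon_{0}$, and the same identification of the leading coefficient with $\zeta_{\varepsilon_{0}}(\omega)$ so that \eqref{iugb} gives $\int_{\Omega}\log(\eta(\omega))\,d\mathbb{P}(\omega)<0$ once $\varepsilon_{0}$ is small (the paper's correction is the multiplicative factor $(1+cs(\omega)^{\alpha}\varepsilon_{0}^{\alpha})$ rather than your additive $r(\omega,\varepsilon_{0})$). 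One small repair: the contribution from $\Vert\mathcal{L}_{\omega}f\Vert_{L^{1}}\leq\Vert f\Vert_{L^{1}}$ must be absorbed into $D(\omega)$ rather than added to $\eta(\omega)$, since appending a term of size $\Vert\mathcal{L}_{\omega}\Vert_{L^{1}\to L^{1}}$ to $\eta$ would force $\eta(\omega)\geq 1$ and destroy the negativity of $\int\log\eta\,d\mathbb{P}$.
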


\begin{proof}
Let $x=(x_{1},...,x_{n})\in C$, $\omega\in\Omega$, let $\alpha\in(0,1]$ and
$\varepsilon_{0}>0$ be small enough. By Proposition \eqref{ohdpoh} and
\ref{properties} $($i$)$, we have%
\[
osc(\mathcal{L}_{\omega}f,B_{\varepsilon}(x))\leq%
{\displaystyle\sum\limits_{i\in\mathcal{I}}}
osc\Big((g_{\omega}f)\circ(T_{\omega,i})^{-1}1_{T_{\omega}U_{i}}%
,B_{\varepsilon}(x)\Big)\text{.}%
\]
By Proposition \ref{properties} $($ii$)$, we get%
\begin{align}
osc(\mathcal{L}_{\omega}f,B_{\varepsilon}(x))  &  \leq%
{\displaystyle\sum\limits_{i\in\mathcal{I}}}
\left(
\begin{array}
[c]{c}%
osc((g_{\omega}f)\circ T_{\omega,i}{}^{-1},T_{\omega}U_{i}\cap B_{\varepsilon
}(x))1_{T_{\omega}U_{i}}(x)\\
+2[\underset{T_{\omega}U_{i}\cap B_{\varepsilon}(x)}{E\sup}\left\vert
(g_{\omega}f)\circ T_{\omega,i}{}^{-1}\right\vert ]1_{B_{\varepsilon}(\partial
T_{\omega}U_{i})}(x)
\end{array}
\right) \nonumber\\
&  \leq%
{\displaystyle\sum\limits_{i\in\mathcal{I}}}
\left(
\begin{array}
[c]{c}%
osc(g_{\omega}f,U_{i}\cap(T_{\omega})^{-1}B_{\varepsilon}(x))1_{T_{\omega
}U_{i}}(x)\\
+2[\underset{U_{i}\cap(T_{\omega,i})^{-1}B_{\varepsilon}(x)}{E\sup}\left\vert
g_{\omega}f\right\vert ]1_{B_{\varepsilon}(\partial T_{\omega}U_{i})}(x)
\end{array}
\right)  \text{.} \label{falafel}%
\end{align}
Let
\[
R_{\omega,i}^{(1)}(x):=osc\Big(g_{\omega}f,U_{i}\cap(T_{\omega})^{-1}%
B_{\varepsilon}(x)\Big)\text{.}%
\]
For $x\in T_{\omega}U_{i}$, let $y_{\omega,i}:=T_{\omega,i}{}^{-1}x$. Then, by
(PE1), we have%
\[
R_{\omega,i}^{(1)}(x)\leq osc\Big(g_{\omega}f,U_{i}\cap B_{s(\omega
)\varepsilon}(y_{\omega,i})\Big)\text{.}%
\]
By Proposition \ref{properties} $($iii$)$, for almost all $x\in T_{\omega
}U_{i}$, we have%
\[
R_{\omega,i}^{(1)}(x)\leq osc(f,B_{s(\omega)\varepsilon}(y_{\omega
,i}))\underset{U_{i}\cap B_{s(\omega)\varepsilon}(y_{\omega,i})}{E\sup
}g_{\omega}+osc(g_{\omega},B_{s(\omega)\varepsilon}(y_{\omega,i}%
))\underset{U_{i}\cap B_{s(\omega)\varepsilon}(y_{\omega,i})}{E\inf}\left\vert
f\right\vert \text{.}%
\]
Applying (PE3), we have%
\[
R_{\omega,i}^{(1)}(x)\leq(1+cs(\omega)^{\alpha}\varepsilon^{\alpha
})osc(f,B_{s(\omega)\varepsilon}(y_{\omega,i}))g_{\omega}(y_{\omega
,i})+\left\vert f\right\vert (y_{\omega,i})g_{\omega}(y_{\omega,i}%
)cs(\omega)^{\alpha}\varepsilon^{\alpha})\text{.}%
\]
Hence, the first term in \eqref{falafel} can be estimated as
\[%
{\displaystyle\sum\limits_{i\in\mathcal{I}}}
R_{\omega,i}^{(1)}1_{T_{\omega}U_{i}}\leq(1+cs(\omega)^{\alpha}\varepsilon
^{\alpha})\mathcal{L}_{\omega}(osc(f,B_{s(\omega)\varepsilon}(\cdot
)))+cs(\omega)^{\alpha}\varepsilon^{\alpha}\mathcal{L}_{\omega}(\left\vert
f\right\vert )\text{.}%
\]
Integrating both sides yields
\[
\underset{%
\mathbb{R}
^{N}}{\int}%
{\displaystyle\sum\limits_{i\in\mathcal{I}}}
R_{\omega,i}^{(1)}1_{T_{\omega}U_{i}}\leq(1+cs(\omega)^{\alpha}\varepsilon
^{\alpha})\underset{%
\mathbb{R}
^{N}}{\int}osc(f,B_{s(\omega)\varepsilon}(\cdot))+cs(\omega)^{\alpha
}\varepsilon^{\alpha}\underset{%
\mathbb{R}
^{N}}{\int}\left\vert f\right\vert \text{.}%
\]
By definition of $\mid f\mid_{\alpha}$ in \eqref{bdedosc} with $\tilde
{\varepsilon}_{0}=S\varepsilon_{0}$, we have
\begin{equation}
\underset{%
\mathbb{R}
^{N}}{\int}%
{\displaystyle\sum\limits_{i\in\mathcal{I}}}
R_{\omega,i}^{(1)}1_{T_{\omega}U_{i}}\leq(1+cs(\omega)^{\alpha}\varepsilon
^{\alpha})(s(\omega)\varepsilon)^{\alpha}\left\vert f\right\vert _{\alpha
}+c(s(\omega)\varepsilon)^{\alpha}\left\Vert f\right\Vert _{L^{1}}\text{.}
\label{kekeke}%
\end{equation}
For the second term in \eqref{falafel}, let
\[
R_{\omega,i}^{(2)}(x):=\Big(\underset{U_{i}\cap(T_{\omega,i})^{-1}%
B_{\varepsilon}(x)}{E\sup}\left\vert g_{\omega}f\right\vert
\Big)1_{B_{\varepsilon}(\partial T_{\omega}U_{i})}(x)\text{.}%
\]
If $x\notin B_{\varepsilon}(T_{\omega}U_{i})$ then $R_{\omega,i}^{(2)}(x)=0$.
Using the definition of $g_{\omega}$, (PE1) and \eqref{detalphaholder}, we get%
\[
R_{\omega,i}^{(2)}(x)\leq\Big(\underset{B_{\varepsilon}(y_{\omega,i})}{E\sup
}\left\vert f\right\vert \Big)\left\vert \det D(T_{\omega,i})^{-1}x\right\vert
(1+cs(\omega)^{\alpha}\varepsilon^{\alpha})1_{B_{\varepsilon}(\partial
T_{\omega}U_{i})}(x)\text{.}%
\]
Integrating both sides over $%
\mathbb{R}
^{N}$ followed by a change of variable $x=T_{\omega,i}y_{\omega,i}$ gives%
\begin{align}
&  \frac{1}{(1+cs(\omega)^{\alpha}\varepsilon^{\alpha})}\underset{%
\mathbb{R}
^{N}}{\int}R_{\omega,i}^{(2)}(x)dx\label{aa}\\
&  \leq\underset{%
\mathbb{R}
^{N}}{\int}1_{B_{\varepsilon}(\partial T_{\omega}U_{i})}(T_{\omega,i}%
y_{\omega,i})\underset{B_{s(\omega)\varepsilon}(y_{\omega,i})}{E\sup
}\left\vert f\right\vert dy_{\omega,i}\text{.}\nonumber
\end{align}
By Proposition \ref{properties} $($iv$)$, choosing $a=s(\omega)\varepsilon$,
$b=(S-s(\omega))\varepsilon_{0}$ and $c=S\varepsilon_{0}$, we get \eqref{aa}
is less than or equal to%
\[
\underset{%
\mathbb{R}
^{N}}{\int}\frac{1_{B_{\varepsilon}(\partial T_{\omega}U_{i})}(T_{\omega,i}%
y)}{m(B_{(S-s(\omega))\varepsilon_{0}}(y))}dy\underset{B_{(S-s(\omega
))\varepsilon_{0}}(y)}{\int}(\left\vert f\right\vert (z)+osc(f,B_{S\varepsilon
_{0}}(z)))dz\text{,}%
\]
which becomes, after changing the order of integration,
\[
\underset{%
\mathbb{R}
^{N}}{\int}(\left\vert f\right\vert (z)+osc(f,B_{S\varepsilon_{0}%
}(z)))dz\underset{%
\mathbb{R}
^{N}}{\int}\frac{1_{(T_{\omega,i})^{-1}B_{\varepsilon}(\partial T_{\omega
}U_{i})}(y)1_{B_{(S-s(\omega))\varepsilon_{0}}(z)}(y)}{m(B_{(S-s(\omega
))\varepsilon_{0}}(y))}dy\text{.}%
\]
Finally, since the measure of a ball depends only on its radius, we can
replace the second integral by%
\[
\frac{m(T_{\omega,i}{}^{-1}B_{\varepsilon}(\partial T_{\omega}U_{i})\cap
B_{(S-s(\omega))\varepsilon_{0}}(z))}{m(B_{(S-s(\omega))\varepsilon_{0}}%
(z))}\text{.}%
\]
By the definitions of $G_{\omega,\varepsilon_{0}}(\varepsilon)$ in \eqref{gg},
we get%
\begin{equation}
\frac{1}{(1+cs(\omega)^{\alpha}\varepsilon^{\alpha})}\underset{%
\mathbb{R}
^{N}}{\int}%
{\displaystyle\sum\limits_{i}}
R_{\omega,i}^{(2)}(x)dx\leq G_{\omega,\varepsilon_{0}}(\varepsilon
)(S\varepsilon_{0})^{\alpha}\left\vert f\right\vert _{\alpha}+G_{\omega
,\varepsilon_{0}}(\varepsilon)\left\Vert f\right\Vert _{L^{1}}\text{.}
\label{twoi}%
\end{equation}
By combining \eqref{kekeke} and \eqref{twoi} into \eqref{falafel} and dividing
both sides by $\varepsilon^{\alpha}$ and taking the supremum over all
$\varepsilon\leq\tilde{\varepsilon}_{0}=S\varepsilon_{0}$, we get%
\begin{equation}
\left\Vert \mathcal{L}_{\omega}f\right\Vert _{\alpha}\leq\eta(\omega
)\left\Vert f\right\Vert _{\alpha}+D(\omega)\left\Vert f\right\Vert _{L^{1}%
}\text{,} \label{LY}%
\end{equation}
where%
\begin{align}
\eta(\omega)  &  =(1+cs(\omega)^{\alpha}\varepsilon_{0}^{\alpha}%
)\Big(s(\omega)^{\alpha}+2\underset{\varepsilon\leq\varepsilon_{0}}{\sup}%
\frac{G_{\omega,\varepsilon_{0}}(\varepsilon)}{\varepsilon^{\alpha}%
}(S\varepsilon_{0})^{\alpha}\Big)\text{,}\label{eta}\\
&  =(1+cs(\omega)^{\alpha}\varepsilon_{0}^{\alpha})\zeta_{\varepsilon_{0}%
}(\omega)\text{,}\nonumber
\end{align}%
\begin{align}
D(\omega)  &  =cs(\omega)^{\alpha}+2(1+cs(\omega)^{\alpha}\varepsilon
_{0}^{\alpha})\underset{\varepsilon\leq\varepsilon_{0}}{\sup}\frac
{G_{\omega,\varepsilon_{0}}(\varepsilon)}{\varepsilon^{\alpha}}%
\label{dconstant}\\
&  \leq cs(\omega)^{\alpha}+(1+cs(\omega)^{\alpha}\varepsilon_{0}^{\alpha
})\zeta_{\varepsilon_{0}}(\omega)(\max(S,1)\varepsilon_{0})^{-\alpha}%
\text{.}\nonumber
\end{align}
Taking $\varepsilon_{0}$ small enough and using \eqref{iugb}, we have
\[
\int_{\Omega}\log(\eta(\omega))d\mathbb{P(\omega)}<0\text{.}%
\]
\end{proof}

The following lemma is taken from \cite{CQ} applied to our setting.

\begin{lemma}
[{\cite[Lemma C.$5$]{CQ}}]\label{ioc}Suppose we have the following inequality
\[
\left\Vert \mathcal{L}_{\omega}f\right\Vert _{\alpha}\leq A(\omega)\left\Vert
f\right\Vert _{\alpha}+B(\omega)\left\Vert f\right\Vert _{L^{1}}\text{,}%
\]
for all $f\in V_{\alpha}$ where $A(\omega)$ and $B(\omega)$ are measurable and
$\int_{\Omega}\log(A(\omega))d\mathbb{P(\omega)<}0$. Then there exists a full
measure subset $\Omega_{1}\subseteq\Omega$ with the following property
\[
\lim_{k\rightarrow\infty}\frac{1}{k}\log\Vert\mathcal{L}_{\bar{\omega}}%
^{(k)}\Vert_{ic(X)}\leq\int_{\Omega}\log(A(\omega))d\mathbb{P(\omega)}\text{
for all }\bar{\omega}\in\Omega_{1}\text{.}%
\]
\end{lemma}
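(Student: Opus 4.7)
The plan is to iterate the Lasota-Yorke inequality, use the compact embedding $V_{\alpha}\hookrightarrow L^{1}$ to control the index of compactness norms of the iterated cocycle, and then apply Birkhoff's ergodic theorem to $\log A$. First, since each transfer operator is a contraction on $L^{1}$ (as $\mathcal{L}_{\omega}$ is positive and integral-preserving), a routine induction on the one-step hypothesis yields, for every $k\geq 1$ and $f\in V_{\alpha}$,
\[
\|\mathcal{L}_{\omega}^{(k)}f\|_{\alpha}\leq A^{(k)}(\omega)\|f\|_{\alpha}+\tilde{B}^{(k)}(\omega)\|f\|_{L^{1}},
\]
where $A^{(k)}(\omega):=\prod_{j=0}^{k-1}A(\sigma^{j}\omega)$ and $\tilde{B}^{(k)}(\omega):=\sum_{j=0}^{k-1}B(\sigma^{j}\omega)\prod_{i=j+1}^{k-1}A(\sigma^{i}\omega)$.

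Next, I fix $k$ and $\delta>0$ and use that the unit ball $B_{V_{\alpha}}$ is precompact in $L^{1}$, by the compact embedding recalled in Section 2. Cover $B_{V_{\alpha}}$ by finitely many $L^{1}$-balls of radius $\delta$; by choosing a representative in $B_{V_{\alpha}}$ from each ball and doubling the radius, obtain centres $f_{1},\dots,f_{N}\in B_{V_{\alpha}}$ such that every $f\in B_{V_{\alpha}}$ satisfies $\|f-f_{j}\|_{L^{1}}\leq 2\delta$ for some $j$. Since also $\|f-f_{j}\|_{\alpha}\leq 2$, applying the iterated bound to $f-f_{j}$ gives
\[
\|\mathcal{L}_{\omega}^{(k)}f-\mathcal{L}_{\omega}^{(k)}f_{j}\|_{\alpha}\leq 2A^{(k)}(\omega)+2\delta\,\tilde{B}^{(k)}(\omega),
\]
so $\mathcal{L}_{\omega}^{(k)}(B_{V_{\alpha}})$ is covered by $N$ balls of this radius in $V_{\alpha}$. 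Sending $\delta\to 0$ with $k$ fixed produces $\|\mathcal{L}_{\omega}^{(k)}\|_{ic(V_{\alpha})}\leq 2A^{(k)}(\omega)$ for $\mathbb{P}$-a.e. $\omega$.

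Finally, take $\tfrac{1}{k}\log$ and invoke Birkhoff's ergodic theorem applied to $\log A$ (which is in $L^{1}(\mathbb{P})$ in our applications thanks to the explicit form of $\eta(\omega)$ in Theorem~\ref{quco}; in greater generality the extended Birkhoff theorem serves the same purpose). This produces a $\sigma$-invariant full-measure set $\Omega_{1}$ on which
\[
\frac{1}{k}\log A^{(k)}(\omega)=\frac{1}{k}\sum_{j=0}^{k-1}\log A(\sigma^{j}\omega)\longrightarrow\int_{\Omega}\log A(\omega)\,d\mathbb{P}(\omega),
\]
and, combined with the index-of-compactness bound above, gives the desired inequality for every $\bar\omega\in\Omega_{1}$. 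The main subtlety is choosing the centres $f_{j}$ inside $V_{\alpha}$ so that the iterated Lasota-Yorke estimate applies to the difference $f-f_{j}$; this is what the small enlargement trick above is designed to secure.
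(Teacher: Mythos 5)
Your proof is correct and follows essentially the same route as the source the paper relies on for this lemma (\cite[Lemma C.5]{CQ}); the paper itself gives no proof, only the citation, and the cited argument is exactly your combination of iterating the Lasota--Yorke inequality, covering the image of the unit ball via the compact embedding $V_{\alpha}\hookrightarrow L^{1}$ to get $\Vert\mathcal{L}_{\omega}^{(k)}\Vert_{ic(V_{\alpha})}\leq 2A^{(k)}(\omega)$, and applying Birkhoff's theorem to $\log A$. The only points worth flagging are that the $L^{1}$-contractivity of $\mathcal{L}_{\omega}$ used in the iteration is an extra (always satisfied here, but unstated) hypothesis beyond the abstract inequality, and that your covering argument bounds the $\limsup$ rather than establishing existence of the limit --- which is all that is actually needed downstream.
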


\begin{corollary}
\label{quasicom}Let $T=\{T_{\omega}\}_{\omega\in\Omega}$ be a random Saussol
map. Provided $\varepsilon_{0}$ in \eqref{bdedosc} is small enough, the
following hold. (i) The random dynamical system generated by $T$ is
quasi-compact and (ii) its maximal Lyapunov exponent $\lambda^{\ast}$ is zero.
\begin{proof}
[Proof of Corollary \ref{quasicom} (i)]By Lemma \ref{ioc}\ and \eqref{LY}, the
index of compactness $\mathcal{K}^{\ast}$ is bounded above by $\int_{\Omega
}\log(\eta(\omega))d\mathbb{P(\omega)<}0$. Next, we show that $\lambda^{\ast}$
$\geq0$. Since the Perron--Frobenius operator $\mathcal{L}_{\omega}^{(k)}$ is
a Markov operator for each $\omega\in\Omega$, then for any density function
$f\in V_{\alpha}$, we have that
\[
\Vert\mathcal{L}_{\omega}^{(k)}f\Vert_{\alpha}\geq\Vert\mathcal{L}_{\omega
}^{(k)}f\Vert_{L^{1}}=\Vert f\Vert_{L^{1}}=1\text{,}%
\]
which shows that
\begin{equation}
\lambda^{\ast}\geq0. \label{haciiii}%
\end{equation}
\end{proof}
\begin{proof}
[Proof of Corollary \ref{quasicom} (ii)]To prove $\lambda^{\ast}\leq0$. We
have $\left\Vert \mathcal{L}_{\omega}\right\Vert _{1}\leq1$, it suffices to
consider the growth of the $\alpha$-oscillation of the term $\mathcal{L}%
_{\omega}^{(k)}f$. Applying the arguments in Lemma C.$5$ in \cite{CQ} and
Proposition $1.4$ in \cite{B}, the functions $\eta(\omega)$ and $D(\omega)$
can be redefined such that \eqref{LY} is satisfied and $D(\omega)$ is
uniformly bounded by $\tilde{D}$ and $\tilde{D}\geq\xi\tilde{\varepsilon}%
_{0}^{-\alpha}$, where $\xi$ is defined the proof of Theorem \ref{muer}.
Therefore, we have%
\begin{equation}
\left\Vert \mathcal{L}_{\omega}f\right\Vert _{\alpha}\leq\eta(\omega
)\left\Vert f\right\Vert _{\alpha}+\tilde{D}\left\Vert f\right\Vert _{L^{1}%
}\text{,} \label{HLY}%
\end{equation}
and $\int_{\Omega}\log(\eta(\omega))d\mathbb{P(\omega)<}0$. Iterating
\eqref{HLY}, we get a bound on the sequence $(|\mathcal{L}_{\omega}%
^{(k)}f|_{\alpha})_{k=1}^{\infty}$. Hence,
\[
\lim_{k\rightarrow\infty}\frac{1}{k}\log\Vert\mathcal{L}_{\omega}^{(k)}%
f\Vert_{\alpha}\leq0\text{.}%
\]
and since this is true for $\mathbb{P}$-a.e. $\omega\in\Omega$, we get
$\lambda^{\ast}\leq0$.\ By \eqref{haciiii}, we have $\lambda^{\ast}=0$.
\end{proof}
\end{corollary}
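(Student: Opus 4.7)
The plan is to establish the two claims essentially independently, relying on Theorem~\ref{quco} and Lemma~\ref{ioc} as the main inputs. For (i), I would note that Theorem~\ref{quco} gives the Lasota--Yorke inequality
\[
\|\mathcal{L}_{\omega} f\|_{\alpha} \leq \eta(\omega)\|f\|_{\alpha} + D(\omega)\|f\|_{L^{1}},
\]
with $\int_{\Omega}\log \eta(\omega)\,d\mathbb{P}(\omega)<0$. Applying Lemma~\ref{ioc} with $A(\omega)=\eta(\omega)$ and $B(\omega)=D(\omega)$ yields, on a full measure set,
\[
\mathcal{K}^{\ast}\leq \int_{\Omega}\log \eta(\omega)\,d\mathbb{P}(\omega)<0.
\]
Thus once I prove in (ii) that $\lambda^{\ast}=0$, the quasi-compactness $\mathcal{K}^{\ast}<\lambda^{\ast}$ follows for free.

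For (ii), I would prove the two inequalities $\lambda^{\ast}\geq 0$ and $\lambda^{\ast}\leq 0$ separately. The lower bound is immediate: since $\mathcal{L}_{\omega}$ is a transfer operator, it is Markov, so $\|\mathcal{L}_{\omega}^{(k)} f\|_{L^{1}}=\|f\|_{L^{1}}$ for any density $f\in V_{\alpha}$, and because $\|\cdot\|_{\alpha}\geq \|\cdot\|_{L^{1}}$ we get $\|\mathcal{L}_{\omega}^{(k)}f\|_{\alpha}\geq 1$, giving $\lambda^{\ast}\geq 0$. For the upper bound I would iterate the Lasota--Yorke inequality: using $\|\mathcal{L}_{\omega}\|_{L^{1}\to L^{1}}\leq 1$, a straightforward induction on the one-step inequality gives
\[
|\mathcal{L}_{\omega}^{(k)}f|_{\alpha}\leq \Bigl(\prod_{j=0}^{k-1}\eta(\sigma^{j}\omega)\Bigr)|f|_{\alpha}+\sum_{j=0}^{k-1}D(\sigma^{j}\omega)\prod_{i=j+1}^{k-1}\eta(\sigma^{i}\omega)\,\|f\|_{L^{1}}.
\]
Taking $\tfrac{1}{k}\log$ and letting $k\to\infty$, the first term decays exponentially by Birkhoff's theorem applied to $\log\eta$ (integral strictly negative). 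The second term is the one that needs care, and this is where the main obstacle lies.

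The principal obstruction is that $D(\omega)$ is only measurable, not uniformly bounded, so the second sum above need not decay and its $\tfrac{1}{k}\log$ is not automatically $\leq 0$. The fix, as used in Proposition~$1.4$ of \cite{B} and Lemma~C.$5$ of \cite{CQ}, is to replace the one-step inequality with an equivalent one in which $D$ is traded for a uniform constant $\tilde D$ at the price of slightly enlarging $\eta$ (while keeping $\int \log\eta\,d\mathbb{P}<0$). Concretely, one works with the modified norm $\|\cdot\|_{\alpha}'=\|\cdot\|_{L^{1}}+\kappa|\cdot|_{\alpha}$ for a suitable measurable scaling, or restricts to the invariant set where the Birkhoff sums of $\log D^{+}$ grow subexponentially; in either case the iterated inequality becomes
\[
\|\mathcal{L}_{\omega}^{(k)}f\|_{\alpha}\leq \Bigl(\prod_{j=0}^{k-1}\tilde\eta(\sigma^{j}\omega)\Bigr)\|f\|_{\alpha}+\tilde D\,\|f\|_{L^{1}},
\]
from which $\tfrac{1}{k}\log\|\mathcal{L}_{\omega}^{(k)}f\|_{\alpha}\to$ something $\leq 0$ $\mathbb{P}$-a.s. by Birkhoff. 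Combined with the lower bound this yields $\lambda^{\ast}=0$, and together with step~(i) establishes quasi-compactness.
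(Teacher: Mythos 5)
Your proposal follows essentially the same route as the paper: part (i) via Lemma~\ref{ioc} applied to the Lasota--Yorke inequality of Theorem~\ref{quco}, the bound $\lambda^{\ast}\geq 0$ from the Markov property of the transfer operator on densities, and $\lambda^{\ast}\leq 0$ by iterating a modified Lasota--Yorke inequality in which $D(\omega)$ is traded for a uniform constant $\tilde D$ following \cite{B} and \cite{CQ}. You correctly identify the only delicate point (the non-uniformity of $D$) and resolve it exactly as the paper does, so no further changes are needed.
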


\section{Existence of random invariant and skew product ACIPs, finiteness, and
physicality of measures}

\begin{definition}
Let $T=\{T_{\omega}\}_{\omega\in\Omega}$ be a random Saussol map. A family
$\{\mu_{\omega}\}_{\omega\in\Omega}$ is called a random invariant measure for
$T$ if $\mu_{\omega}$ is a probability measure on $D$, the map $\omega
\mapsto\mu_{\omega}$ is measurable and%
\[
T_{\omega}\mu_{\omega}=\mu_{\sigma\omega}\text{, for }\mathbb{P}\text{-a.e.
}\omega\in\Omega\text{.}%
\]
A family $\{h_{\omega}\}_{\omega\in\Omega}$ is called a random invariant
density for $T$ if $h_{\omega}\geq0$, $h_{\omega}\in L^{1}(C)$, $\left\Vert
h_{\omega}\right\Vert _{L^{1}}=1$, the map $\omega\mapsto h_{\omega}$ is
measurable and%
\[
\mathcal{L}_{{\omega}}h_{\omega}=h_{\sigma\omega}\text{, for }\mathbb{P}%
\text{-a.e. }\omega\in\Omega\text{.}%
\]
\end{definition}

\begin{theorem}
\label{muer}Consider a random Saussol map $T$. If $\varepsilon_{0}$ is small
enough, then for each $\omega\in\Omega\ $and $k=1,2,\dots$, we define
\[
h_{\omega}^{k}=(\mathcal{L}_{\sigma^{-1}\omega}\circ\dots\circ\mathcal{L}%
_{\sigma^{-(k-1)}\omega}\circ\mathcal{L}_{\sigma^{-k}\omega})1\text{,}%
\]
where $1\in V_{\alpha}$ is the constant function and for each $s=1,2,\dots$,
we define%
\[
H_{\omega}^{s}=\frac{1}{s}\sum_{k=1}^{s}h_{\omega}^{k}\text{.}%
\]
Then, for $\mathbb{P}$-a.e. $\omega\in\Omega$:\newline(i) the sequence
$\{H_{\omega}^{s}\}_{s\in\mathbb{N}}$ is relatively compact in $L^{1}$;
and\newline(ii) the following limit exists,%
\begin{equation}
\lim_{s\rightarrow\infty}H_{\omega}^{s}=:h_{\omega}\in V_{\alpha}\text{ in
}L^{1}\text{.} \label{hw}%
\end{equation}
Moreover, $\{h_{\omega}\}_{\omega\in\Omega}$ is a random invariant density for
$T$.
\end{theorem}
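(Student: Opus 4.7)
The plan is to iterate the Lasota--Yorke inequality \eqref{HLY} of Corollary~\ref{quasicom}(ii) backwards along the orbit of $\omega$ under $\sigma^{-1}$, using the uniformly bounded version in which $D$ has been replaced by $\tilde D$ while $\eta$ still satisfies $\int_\Omega \log\eta\, d\mathbb{P} < 0$. Since $h_\omega^k = \mathcal{L}_{\sigma^{-1}\omega} h_{\sigma^{-1}\omega}^{k-1}$ and $\|h_\omega^k\|_{L^1} = 1$, induction on $k$ gives
\[
\|h_\omega^k\|_\alpha \;\le\; \prod_{j=1}^k \eta(\sigma^{-j}\omega) \,+\, \tilde D \sum_{j=1}^k \prod_{i=1}^{j-1} \eta(\sigma^{-i}\omega).
\]
Because $\sigma^{-1}$ is also ergodic, Birkhoff's theorem makes $\tfrac{1}{k}\sum_{j=1}^k \log\eta(\sigma^{-j}\omega) \to \int_\Omega \log\eta\, d\mathbb{P} < 0$, hence the products decay geometrically and the series converges. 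This yields a measurable $\xi(\omega) < \infty$ with $\sup_k \|h_\omega^k\|_\alpha \le \xi(\omega)$, and the Ces\`aro averages inherit the same bound: $\sup_s \|H_\omega^s\|_\alpha \le \xi(\omega)$ for $\mathbb{P}$-a.e.\ $\omega$.

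Assertion (i) is then immediate from the compact embedding $V_\alpha \hookrightarrow L^1$ recalled in Section~2: the $V_\alpha$-bounded family $\{H_\omega^s\}$ is relatively compact in $L^1$.

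For (ii), I would first extract an $L^1$-subsequential limit $h_\omega$ of $H_\omega^s$. Lower semicontinuity of $|\cdot|_\alpha$ under $L^1$-convergence (by Fatou applied in \eqref{bdedosc}) forces $h_\omega \in V_\alpha$, while $h_\omega \ge 0$ and $\|h_\omega\|_{L^1} = 1$ transfer to the limit. Invariance follows from the identity
\[
\mathcal{L}_\omega H_\omega^s - H_{\sigma\omega}^s \,=\, \tfrac{1}{s}\bigl(h_{\sigma\omega}^{s+1} - h_{\sigma\omega}^1\bigr),
\]
whose $L^1$-norm is at most $2/s$; picking a common subsequence along which both $H_\omega^{s_n} \to h_\omega$ and $H_{\sigma\omega}^{s_n} \to h_{\sigma\omega}$ in $L^1$, and using the $L^1$-continuity of $\mathcal{L}_\omega$, one concludes $\mathcal{L}_\omega h_\omega = h_{\sigma\omega}$.

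The main obstacle is upgrading subsequential convergence to convergence of the full sequence and securing measurability of $\omega \mapsto h_\omega$. For this I would appeal to the Oseledets splitting for $\mathbb{P}$-continuous cocycles from \cite{FLQ}: by Corollary~\ref{quasicom} the cocycle is quasi-compact with $\lambda^* = 0$, so the top Oseledets subspace $Y_1(\omega)$ is finite-dimensional and the complement $V(\omega)$ has strictly negative growth rate. Decomposing $1 = P_1(\sigma^{-k}\omega)1 + P_V(\sigma^{-k}\omega)1$ at the initial point of each backward orbit, the $V$-piece contributes an error that decays exponentially along the $k$-step cocycle, while the restriction of the cocycle to the equivariant finite-dimensional family $\{Y_1(\omega)\}$ is norm-equivariant and admits a fiberwise mean-ergodic theorem. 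Combining these two facts yields convergence of $H_\omega^s$, and measurability of the limit is inherited from the measurability of the Oseledets projections and the $\mathbb{P}$-continuity of $\omega \mapsto \mathcal{L}_\omega$.
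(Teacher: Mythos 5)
Your treatment of part (i) and of the uniform bound on $\sup_k\Vert h_\omega^k\Vert_\alpha$ is essentially the paper's own argument: iterate the hybrid inequality \eqref{HLY} backwards along the $\sigma^{-1}$-orbit, control the products $\eta(\sigma^{-1}\omega)\cdots\eta(\sigma^{-j}\omega)$ via Birkhoff's theorem applied to $\log\eta$, sum the resulting geometrically decaying series to obtain the measurable bound the paper calls $\Theta(\omega)$, and invoke the compact embedding $V_\alpha\hookrightarrow L^1$. (Minor point: $\Vert h_\omega^k\Vert_{L^1}=m(C)$ rather than $1$, but this only affects constants.) Your extraction of subsequential limits, the lower semicontinuity of $|\cdot|_\alpha$ under $L^1$-convergence, and the telescoping identity $\mathcal{L}_\omega H_\omega^s-H_{\sigma\omega}^s=\frac{1}{s}\bigl(h_{\sigma\omega}^{s+1}-h_{\sigma\omega}^1\bigr)$ yielding equivariance of common subsequential limits are all correct.

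The genuine gap is in your final step, the upgrade from subsequential to full convergence. After splitting $1=P_1(\sigma^{-k}\omega)1+P_V(\sigma^{-k}\omega)1$ and discarding the exponentially decaying non-top components, you are left with the Ces\`aro averages of the bounded sequence $w_k:=\mathcal{L}_{\sigma^{-k}\omega}^{(k)}P_1(\sigma^{-k}\omega)1$ inside the fixed finite-dimensional space $Y_1(\omega)$. There is no ``fiberwise mean-ergodic theorem'' to invoke here: $w_{k+1}$ is not obtained from $w_k$ by applying a fixed operator (each $w_k$ originates from a different initial fibre), so boundedness gives relative compactness of $(w_k)$ but not convergence of $\frac{1}{s}\sum_{k\le s}w_k$ --- a bounded sequence that dwells for dyadically long stretches near two distinct vectors has divergent Ces\`aro averages, and nothing in your argument rules this out. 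This is exactly the hard point of assertion (ii), and it is the step the paper delegates to \cite[Theorem $4.2$]{BGT}, which resolves it (together with measurability of $\omega\mapsto h_\omega$) by an argument in the style of Buzzi \cite{B}: pass to the induced Markov operator on $L^1(\Omega\times C)$, use the uniform integrability supplied by the fibrewise $V_\alpha$-bound to obtain weak compactness, and apply the Kakutani--Yosida mean ergodic theorem there before descending to individual fibres. A secondary issue with your route: the Oseledets splitting of \cite{FLQ} requires $\int_\Omega\log^+\Vert\mathcal{L}_\omega\Vert_\alpha\,d\mathbb{P}<\infty$, which the paper only assumes \emph{after} Theorem \ref{muer}, so your proof uses a hypothesis not available at this stage of the development.
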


\begin{proof}
For $k=1,2,\dots$, and $\mathbb{P}$-a.e. $\omega\in\Omega$, the following
holds,
\[
h_{\omega}^{k}=\mathcal{L}_{\sigma^{-k}\omega}^{(k)}1\text{.}%
\]
Applying the hybrid Lasota-Yorke inequality \eqref{HLY}\ to estimate $\Vert
h_{\omega}^{k}\Vert_{\alpha}$, we get%
\begin{align*}
&  \left\Vert h_{\omega}^{k}\right\Vert _{\alpha}=\left\Vert \mathcal{L}%
_{\sigma^{-k}\omega}^{(k)}1\right\Vert _{\alpha}=\left\Vert \mathcal{L}%
_{\sigma^{-1}\omega}((\mathcal{L}_{\sigma^{-2}\omega}\circ...\circ
\mathcal{L}_{\sigma^{-(k-1)}\omega}\circ\mathcal{L}_{\sigma^{-k}\omega
})1)\right\Vert _{\alpha}\\
&  \leq\eta(\sigma^{-1}\omega)\left\Vert \mathcal{L}_{\sigma^{-2}\omega
}(\mathcal{L}_{\sigma^{-3}\omega}\circ...\circ\mathcal{L}_{\sigma
^{-(k-1)}\omega}\circ\mathcal{L}_{\sigma^{-k}\omega})1\right\Vert _{\alpha}\\
&  +\tilde{D}\left\Vert (\mathcal{L}_{\sigma^{-2}\omega}\circ...\circ
\mathcal{L}_{\sigma^{-(k-1)}\omega}\circ\mathcal{L}_{\sigma^{-k}\omega
})1\right\Vert _{L^{1}}\\
&  \leq\eta(\sigma^{-1}\omega)\eta(\sigma^{-2}\omega)...\eta(\sigma^{-k}%
\omega)\left\Vert 1\right\Vert _{\alpha}\\
&  +\tilde{D}\left\Vert (\mathcal{L}_{\sigma^{-2}\omega}\circ...\circ
\mathcal{L}_{\sigma^{-(k-1)}\omega}\circ\mathcal{L}_{\sigma^{-k}\omega
})1\right\Vert _{L^{1}}\\
&  +\tilde{D}\eta(\sigma^{-1}\omega)\left\Vert (\mathcal{L}_{\sigma^{-3}%
\omega}\circ...\circ\mathcal{L}_{\sigma^{-(k-1)}\omega}\circ\mathcal{L}%
_{\sigma^{-k}\omega})1\right\Vert _{L^{1}}\\
&  +\tilde{D}\eta(\sigma^{-2}\omega)\eta(\sigma^{-1}\omega)\left\Vert
(\mathcal{L}_{\sigma^{-4}\omega}\circ...\circ\mathcal{L}_{\sigma
^{-(k-1)}\omega}\circ\mathcal{L}_{\sigma^{-k}\omega})1\right\Vert _{L^{1}}\\
&  +...+\tilde{D}\eta(\sigma^{-k}\omega)...\eta(\sigma^{-2}\omega)\eta
(\sigma^{-1}\omega)\left\Vert 1\right\Vert _{L^{1}}\text{.}%
\end{align*}
Since$\left\Vert 1\right\Vert _{\alpha}=0$, $\left\Vert 1\right\Vert _{L^{1}%
}=m(C)$ and the transfer operator is contractive in $L^{1}$, we have%
\begin{align}
\left\Vert h_{\omega}^{k}\right\Vert _{\alpha}  &  \leq\tilde{D}%
m(C)\Big(1+\eta(\sigma^{-1}\omega)+\eta(\sigma^{-2}\omega)\eta(\sigma
^{-1}\omega)+...+\eta(\sigma^{-k}\omega)...\eta(\sigma^{-1}\omega
)\Big)\nonumber\\
&  =\tilde{D}m(C)\Big(1+\sum_{j=1}^{k}\eta^{(j)}(\sigma^{-j}\omega
)\Big)\text{,} \label{fac}%
\end{align}
where for each $j\in%
\mathbb{N}
$, $\eta^{(j)}(\sigma^{-j}\omega):=\eta(\sigma^{-1}\omega)\eta(\sigma
^{-2}\omega)...\eta(\sigma^{-j}\omega)$. Note that for $j=1,2,...$, we have%
\[
\frac{1}{j}\log\eta^{(j)}(\sigma^{-j}\omega)=\frac{1}{j}\sum_{t=1}^{j}\log
\eta(\sigma^{-t}\omega)\text{.}%
\]
Applying Birkhoff ergodic theorem, we get that the above time average
converges as $j\rightarrow\infty$ to the space average $\int_{\Omega}\log
(\eta(\omega))d\mathbb{P(\omega)}=:\log(\hat{\xi})<0$, for some $0<\hat{\xi
}<1$. Choose $\xi$ such that $0<\hat{\xi}<\xi<1$ and $\xi>\frac{1}{2}$. For
large enough $j_{0}(\omega)$, we have that%
\[
\eta^{(j)}(\sigma^{-j}\omega)<\xi^{j}\text{, for all }j\geq j_{0}%
(\omega)\text{.}%
\]
Let $\theta(\omega)$ be defined as%
\begin{equation}
\theta(\omega):=\underset{1\leq j\leq j_{0}(\omega)}{\max}(\frac{\eta
^{(j)}(\sigma^{-j}\omega)}{\xi^{j}},1)\text{,} \label{wdjqiwh}%
\end{equation}
and hence for all $j$, we have that%
\[
\eta^{(j)}(\omega)<\theta(\omega)\xi^{j}\text{.}%
\]
Taking the sum over $j$, by \eqref{fac}, we get%
\begin{align}
\left\Vert h_{\omega}^{k}\right\Vert _{\alpha}  &  \leq\tilde{D}%
m(C)(1+\theta(\omega)\sum_{j=1}^{k}\xi^{j})\label{fyu}\\
&  \leq\tilde{D}m(C)(1+\theta(\omega)\sum_{j=1}^{\infty}\xi^{j})=\tilde
{D}m(C)(1+\frac{\theta(\omega)}{1-\xi})\text{,}\nonumber
\end{align}
since $0<\xi<1$. Let
\begin{equation}
\Theta(\omega):=\tilde{D}m(C)(1+\frac{\theta(\omega)}{1-\xi})\text{,}
\label{bdonhomega}%
\end{equation}
then we have proven that for every $k\in\mathbb{N}$
\begin{equation}
\left\Vert h_{\omega}^{k}\right\Vert _{\alpha}\leq\Theta(\omega)\text{.}
\label{laii}%
\end{equation}
From this inequality, it follows $\{\left\Vert h_{\omega}^{k}\right\Vert
_{\alpha}\}_{k\in\mathbb{N}}$ is bounded and hence the sequence of averages
$\{\left\Vert H_{\omega}^{s}\right\Vert _{\alpha}\}_{s\in\mathbb{N}}$ too.
Therefore, $\{H_{\omega}^{s}\}_{s\in\mathbb{N}}$ is relatively compact in
$L^{1}$ by Lemma A.$1$\ in \cite{liv}. This establishes (i). Then, the same
argument in the proof of \cite[Theorem $4.2$]{BGT}, we have $\{H_{\omega}%
^{s}\}_{s\in\mathbb{N}}$ converges in the strong sense to a random invariant
density $h_{\omega}$, as in \eqref{hw}. The relative compactness of
$V_{\alpha}$ in $L^{1}$ implies that $h_{\omega}\in V_{\alpha}$. This proves (ii).
\end{proof}

\begin{remark}
\label{acip} For $\mathbb{P}$-a.e. $\omega\in\Omega$, Let $\mu_{\omega}$ on
the fiber $\{\omega\}\times C\subset\Omega\times C$, as
\begin{equation}
\frac{d\mu_{\omega}}{dm}=h_{\omega}\text{,} \label{samacip}%
\end{equation}
where $h_{\omega}$ is given by \eqref{hw}. Then, $\mu_{\omega}$ is a random
invariant ACIP and the measure $\mu$\ defined on $\mathbb{P\times}%
m$-measurable sets $A\subseteq$ $\Omega\times C$ by
\begin{equation}
\mu(A)=\int\limits_{\Omega}\mu_{\omega}(A)d\mathbb{P(\omega)}\text{,}
\label{skewacip}%
\end{equation}
is an ACIP for the associated skew product $F$ defined in \eqref{sp}.
\end{remark}

For the rest of the paper, we assume%
\[
\int_{\Omega}\log^{+}\left\Vert \mathcal{L}_{\omega}\right\Vert _{\alpha
}d\mathbb{P(\omega)<}\infty\text{.}%
\]

By Corollary~\ref{quasicom}, random Saussol maps give rise to quasi-compact
random dynamical systems with $\lambda_{1}=0$. Therefore, Theorem $17$ in
\cite{FLQ} implies the following.

\begin{corollary}
\label{coro1} For $\mathbb{P}$-a.e. $\omega\in\Omega$, the random invariant
density $h_{\omega}$ given in \eqref{hw} belongs to the Oseledets space
$Y_{1}(\omega)$ given in \eqref{y1}. Moreover, the number $r$ of ergodic skew
product ACIPs $\mu_{1},...,\mu_{r}$ defined in Equation \eqref{skewacip} is
finite and
\begin{equation}
r\leq d_{1}=\dim(Y_{1}(\omega))\text{.} \label{easybd}%
\end{equation}
\end{corollary}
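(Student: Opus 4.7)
The plan is to feed the structural information collected in Corollary \ref{quasicom} and Theorem \ref{muer} into the Oseledets multiplicative ergodic theorem for $\mathbb{P}$-continuous cocycles from \cite{FLQ}, and then convert the finite dimensionality of the top Oseledets subspace into the claimed bound on the number of ergodic skew product ACIPs.

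\emph{Step 1: obtaining the splitting.} Corollary \ref{quasicom} gives quasi-compactness of the transfer operator cocycle with $\lambda^{\ast}=\lambda_{1}=0>\mathcal{K}^{\ast}$. Combined with the standing assumption $\int_{\Omega}\log^{+}\Vert\mathcal{L}_{\omega}\Vert_{\alpha}\,d\mathbb{P}(\omega)<\infty$ and the $\mathbb{P}$-continuity of $\omega\mapsto\mathcal{L}_{\omega}$ built into Definition \ref{RSM}, this places us squarely in the setting of Theorem~17 of \cite{FLQ}, yielding the Oseledets splitting (\ref{y1}) with finite-dimensional Lyapunov subspaces $Y_{1}(\omega),\dots,Y_{l}(\omega)$ and a residual space $V(\omega)$ on which the cocycle norm grows at rate at most $\mathcal{K}^{\ast}<0$.

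\emph{Step 2: placing $h_{\omega}$ in $Y_{1}(\omega)$.} Write $h_{\omega}=\sum_{j=1}^{l}\Pi_{j}(\omega)h_{\omega}+\Pi_{V}(\omega)h_{\omega}$, where $\Pi_{j}$ and $\Pi_{V}$ are the Oseledets projectors, whose operator norms are tempered in $\omega$. Using the equivariance of the splitting together with $\mathcal{L}_{\sigma^{-k}\omega}^{(k)}h_{\sigma^{-k}\omega}=h_{\omega}$, for each $k\in\mathbb{N}$ and each $j\geq 2$,
\[
\Pi_{j}(\omega)h_{\omega}=\mathcal{L}_{\sigma^{-k}\omega}^{(k)}\Pi_{j}(\sigma^{-k}\omega)h_{\sigma^{-k}\omega},
\]
and analogously for $\Pi_{V}$. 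The bound (\ref{laii}) shows $\Vert h_{\sigma^{-k}\omega}\Vert_{\alpha}\leq\Theta(\sigma^{-k}\omega)$, and $\Theta$ is tempered because $\theta$ in (\ref{wdjqiwh}) is, inheriting this from the Birkhoff ergodic theorem applied to $\log\eta$. Combining this tempered growth with the Oseledets rates $\lambda_{j}<0$ on $Y_{j}(\sigma^{-k}\omega)$ for $j\geq 2$ and $\leq\mathcal{K}^{\ast}<0$ on $V(\sigma^{-k}\omega)$ (both also tempered up to a constant depending measurably on $\omega$), the right-hand side of the displayed identity tends to $0$ in $\Vert\cdot\Vert_{\alpha}$ as $k\to\infty$. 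Hence $\Pi_{j}(\omega)h_{\omega}=0$ for $j\geq 2$ and $\Pi_{V}(\omega)h_{\omega}=0$, so $h_{\omega}\in Y_{1}(\omega)$ for $\mathbb{P}$-a.e. $\omega$.

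\emph{Step 3: bounding the number of ergodic skew product ACIPs.} Let $\mu_{1},\dots,\mu_{r}$ be distinct ergodic skew product ACIPs for $F$, with fiberwise densities $h^{1}_{\omega},\dots,h^{r}_{\omega}$ supplied by Remark~\ref{acip}. Distinct ergodic invariant measures of $F$ are mutually singular, and standard ergodic-decomposition arguments show that the $h^{i}_{\omega}$ are linearly independent in $L^{1}(C)$ for $\mathbb{P}$-a.e. $\omega$: any non-trivial linear relation that held on a positive-$\mathbb{P}$ set would produce, via $F$-invariance, a convex combination of the $\mu_{i}$ contradicting their extremality. Step 2 places every $h^{i}_{\omega}$ in the finite-dimensional subspace $Y_{1}(\omega)$, so linear independence forces $r\leq\dim Y_{1}(\omega)=d_{1}$, yielding both finiteness of the number of ergodic ACIPs and the bound (\ref{easybd}).

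\emph{Main obstacle.} The delicate point is Step~2: turning the backward invariance $h_{\omega}=\mathcal{L}_{\sigma^{-k}\omega}^{(k)}h_{\sigma^{-k}\omega}$ into a vanishing statement requires that both the $V_{\alpha}$-norm of $h_{\sigma^{-k}\omega}$ and the Oseledets projector norms at $\sigma^{-k}\omega$ grow sub-exponentially, so that the contractive Oseledets rates $\lambda_{j}<0$ (resp. $\mathcal{K}^{\ast}<0$) dominate. The temperedness in (\ref{wdjqiwh})--(\ref{bdonhomega}) and the temperedness built into the $\mathbb{P}$-continuous Oseledets theorem are precisely what make this work; once Step~2 is in place, Steps~1 and~3 are routine.
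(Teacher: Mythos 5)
The paper does not spell this proof out: it states that the argument is identical to that of Corollary 4.6 in \cite{BGT}, and your Steps 1--2 reconstruct exactly that standard route (semi-invertible Oseledets theorem of \cite{FLQ} applied to the quasi-compact, $\mathbb{P}$-continuous cocycle with $\lambda_{1}=0$, followed by a pullback/projection argument showing that an equivariant family with tempered $V_{\alpha}$-norms along backward orbits must lie in the top Oseledets space). Minor technical points you assert rather than prove -- that $\left\Vert h_{\omega}\right\Vert _{\alpha}\leq\Theta(\omega)$ passes from \eqref{laii} to the limit, and that $\Theta(\sigma^{-k}\omega)$ grows subexponentially in $k$ (temperedness of $\theta$ as defined through $j_{0}(\omega)$ is not automatic from Birkhoff, though a standard tempering or subsequence argument repairs it) -- are acceptable at this level of detail.

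The genuine gap is in Step 3. You apply the conclusion of Step 2 to the fiberwise densities $h_{\omega}^{i}$ of \emph{arbitrary} ergodic skew product ACIPs $\mu_{1},\dots,\mu_{r}$, but Step 2 was established only for the specific density $h_{\omega}$ of Theorem \ref{muer}, which is constructed as a pullback Ces\`{a}ro limit of the constant function and therefore inherits the bound \eqref{laii}. Remark \ref{acip} does not ``supply'' densities for a general ergodic ACIP; it builds one measure $\mu$ from that one family $h_{\omega}$. For a general ergodic ACIP of $F$, the fiber densities are a priori only in $L^{1}(C)$, so you cannot invoke $\left\Vert h_{\sigma^{-k}\omega}^{i}\right\Vert _{\alpha}\leq\Theta(\sigma^{-k}\omega)$, nor even membership in $V_{\alpha}$, and the projection argument of Step 2 does not apply to them as written. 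To close this you need an extra ingredient -- for instance, that every ergodic skew product ACIP is absolutely continuous with respect to (hence appears in the ergodic decomposition of) the constructed $\mu$, or a separate regularity argument showing that any $L^{1}$ equivariant density can be approximated/pulled back so as to land in $Y_{1}(\omega)$. This is precisely the content carried by the citation to Corollary 4.6 of \cite{BGT}. The remaining part of Step 3 (mutual singularity of distinct ergodic measures implies essentially disjoint fiberwise supports, hence linear independence of the $h_{\omega}^{i}$ for a.e.\ $\omega$, hence $r\leq d_{1}$ once they all sit in $Y_{1}(\omega)$) is correct and is the intended counting mechanism.
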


The proof of this corollary is the same as the proof of Corollary $4.6$ in
\cite{BGT}.

\begin{definition}
\label{phme} Consider the tuple $(\Omega,\mathcal{F},\mathbb{P},\sigma,T)$
where $(\Omega,\mathcal{F},\mathbb{P)}$ is a probability space, $\sigma
:\Omega\circlearrowleft$ an ergodic and invertible $\mathbb{P-}$preserving
transformation and $T=\{T_{\omega}:C\circlearrowleft\}_{\omega\in\Omega}$
where $C\subseteq%
\mathbb{R}
^{n}$. A probability measure $\nu$ on $C$ is called \emph{physical }if for
$\mathbb{P}$-a.e. $\omega\in\Omega$, the Lebesgue measure of the random basin
$RB_{\omega}(\nu)$ of $\nu$ at $\omega$ is positive where%
\begin{equation}
RB_{\omega}(\nu)=\{x\in C:\text{ }\frac{1}{s}\sum_{k=0}^{s-1}\delta
_{T_{\omega}^{(k)}(x)}\rightarrow\nu\text{ as }s\rightarrow\infty\}\text{,}
\label{hhhoh}%
\end{equation}
where $\delta_{x}$ is the Dirac measure at a point $x$ and the convergence in
\eqref{hhhoh} is in the weak convergence sense.
\end{definition}

The next result due to Buzzi applies in our setting.

\begin{theorem}
[{\cite[Proposition $4.1$]{B}}]\label{ph} Let $\mu$ be one of the measures
$\mu_{i}:$ $i=1,...r$ given in Corollary \ref{coro1}. Then, the marginal
measure of $\mu$ on $C$, denoted by $\nu$, is a physical measure on $C$.
\end{theorem}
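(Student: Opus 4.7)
The plan is to deduce physicality from Birkhoff's ergodic theorem applied to the skew product $(F,\mu)$ on $\Omega\times C$, combined with the positivity property of nonzero densities in $V_{\alpha}$ given by Lemma~\ref{ph copy(1)}. Since $\mu$ is ergodic for $F$ by construction (it is one of the ergodic ACIPs from Corollary~\ref{coro1}), Birkhoff applies.

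\textbf{Step 1: Empirical convergence on a full $\mu$-measure set.} Fix a countable dense subset $\{\phi_n\}_{n\in\mathbb{N}}$ of $C(C)$ (using compactness of $C$). Extend each $\phi_n$ to $\tilde{\phi}_n(\omega,x):=\phi_n(x)$, which is $\mu$-integrable and measurable. Since $F$ preserves $\mu$ and is ergodic, Birkhoff's theorem yields, for each $n$, a set of full $\mu$-measure on which
\[
\frac{1}{s}\sum_{k=0}^{s-1}\phi_n\bigl(T_{\omega}^{(k)}(x)\bigr)\longrightarrow \int_{\Omega\times C}\tilde{\phi}_n\,d\mu=\int_C \phi_n\,d\nu,
\]
where $\nu$ is the marginal of $\mu$ on $C$ (obtained from $d\mu_\omega/dm=h_\omega$ via \eqref{samacip}--\eqref{skewacip}). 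Intersecting these countably many full-measure sets yields a single $\mu$-full set $\mathcal{E}\subseteq\Omega\times C$ on which convergence holds simultaneously for every $\phi_n$. A standard $\varepsilon/3$ approximation argument (using density of $\{\phi_n\}$ in $C(C)$ and the uniform bound $\|\frac{1}{s}\sum \delta_{T_\omega^{(k)}(x)}\|\le 1$) promotes this to weak convergence of the empirical measures to $\nu$ on $\mathcal{E}$.

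\textbf{Step 2: Disintegration and passing to fibers.} Applying Fubini to $\mathcal{E}$, for $\mathbb{P}$-a.e.\ $\omega\in\Omega$, the fiber $\mathcal{E}_\omega:=\{x\in C:(\omega,x)\in\mathcal{E}\}$ satisfies $\mu_\omega(\mathcal{E}_\omega)=1$, i.e.\ $\mathcal{E}_\omega\subseteq RB_\omega(\nu)$. Since $d\mu_\omega=h_\omega\,dm$, the condition $\mu_\omega(C\setminus \mathcal{E}_\omega)=0$ becomes $\int_{C\setminus \mathcal{E}_\omega}h_\omega\,dm=0$, so $h_\omega=0$ Lebesgue-a.e.\ on $C\setminus \mathcal{E}_\omega$, equivalently
\[
\{h_\omega>0\}\subseteq \mathcal{E}_\omega\subseteq RB_\omega(\nu)\qquad\text{up to Lebesgue-null sets.}
\]

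\textbf{Step 3: Lebesgue positivity via Saussol's lemma.} By Theorem~\ref{muer}, $h_\omega\in V_\alpha$ with $\|h_\omega\|_{L^1}=1$, so $h_\omega\neq 0$. Lemma~\ref{ph copy(1)} then provides a ball $B_{\varepsilon}(x_\omega)\subset C$ on which $\essinf h_\omega>0$; in particular $m\bigl(\{h_\omega>0\}\bigr)\ge m(B_{\varepsilon}(x_\omega))>0$. Combining with Step~2 gives $m(RB_\omega(\nu))>0$ for $\mathbb{P}$-a.e.\ $\omega$, which is exactly the definition of physicality in Definition~\ref{phme}.

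The only real subtlety is Step~1, namely ensuring that the Birkhoff convergence (a priori pointwise for one test function at a time) upgrades to weak convergence of the empirical measures on a single $\mu$-full set; this is handled by separability of $C(C)$. Steps~2 and~3 are then immediate once one invokes Saussol's positivity lemma for $V_\alpha$, which is precisely the tool that replaces the ``bounded below'' arguments available for BV densities in the one-dimensional setting of \cite{B}.
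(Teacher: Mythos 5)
Your proof is correct and follows the same route as the paper's source for this result (the paper itself offers no proof, deferring entirely to Buzzi's Proposition~4.1, whose argument is exactly this Birkhoff-plus-disintegration scheme: ergodicity of the skew product measure, a countable dense family of test functions to upgrade to weak convergence of empirical measures on a full-measure set, Fubini to pass to fibers, and positivity of the fiber densities on a set of positive Lebesgue measure). One cosmetic remark: Step~3 does not actually need Lemma~\ref{ph copy(1)}, since $h_{\omega}\geq 0$ and $\int h_{\omega}\,dm=1$ already force $m(\{h_{\omega}>0\})>0$; the quantitative ball provided by that lemma is only required later, in the counting argument of Theorem~\ref{ph copy(2)}.
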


The union of all basins of the of the physical measures $\nu_{i}$ coming from
the marginals of $\mu_{i}$ on $C$, $i=1,...r$\ has full Lebesgue measure,
which means Lebesgue almost everywhere, the asymptotic long term behaviour of
a full $\mathbb{P}$-measure set of random orbits will be described by these
physical measures.

\section{An upper bound on the number of ergodic skew product ACIPs}

\begin{theorem}
\label{ph copy(2)} Assume that $m(C)\geq1.$ The number $r$ of ergodic skew
product ACIPs defined in Corollary \ref{coro1} satisfies
\begin{equation}
r\leq\frac{m(C)}{\gamma_{N}}\underset{\omega\in\Omega}{E\inf}\left(
\Theta(\omega)^{\frac{N}{\alpha}}\right)  \text{,} \label{skewbd}%
\end{equation}
where $\Theta(\omega)$ is defined in \eqref{bdonhomega} and $\gamma_{N}$ is
the volume of the $N$-dimensional unit ball.
\end{theorem}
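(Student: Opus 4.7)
The approach is a volume-packing argument built on Lemma \ref{ph copy(1)}. By Corollary \ref{coro1} and Remark \ref{acip}, each of the $r$ ergodic skew product ACIPs disintegrates as $d\mu_i(\omega,x)=h_\omega^i(x)\,dm(x)\,d\mathbb{P}(\omega)$, $i=1,\dots,r$, with $h_\omega^i\in V_\alpha$ a random invariant density. In particular $\int_C h_\omega^i\,dm=1$, and the construction in Theorem \ref{muer} together with \eqref{laii}--\eqref{bdonhomega} yields the uniform fiberwise bound $|h_\omega^i|_\alpha\leq\|h_\omega^i\|_\alpha\leq\Theta(\omega)$ for $\mathbb{P}$-a.e.\ $\omega$.

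The first step is to promote the mutual singularity of the distinct ergodic measures $\mu_1,\dots,\mu_r$ on $\Omega\times C$ to fiberwise essential disjointness of the supports $\{h_\omega^i>0\}$ in $C$. For each pair $i\neq j$, mutual singularity furnishes a measurable set $A_{ij}\subset\Omega\times C$ with $\mu_i(A_{ij})=1$ and $\mu_j(A_{ij})=0$; Fubini applied to the disintegrations of $\mu_i$ and $\mu_j$ forces $m\bigl(\{h_\omega^i>0\}\cap\{h_\omega^j>0\}\bigr)=0$ for $\mathbb{P}$-a.e.\ $\omega$. Intersecting over the finitely many ordered pairs $(i,j)$ gives a full $\mathbb{P}$-measure set $\Omega'\subseteq\Omega$ on which the supports of $h_\omega^1,\dots,h_\omega^r$ are pairwise Lebesgue-essentially disjoint inside $C$.

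The second step is the packing estimate. Fix $\omega\in\Omega'$. Applying Lemma \ref{ph copy(1)} to each nonzero nonnegative $h_\omega^i\in V_\alpha$, using $\int h_\omega^i\,dm=1$ and $|h_\omega^i|_\alpha\leq\Theta(\omega)$, produces a ball $B_i^\omega\subset C$ on which $h_\omega^i$ is bounded below by a strictly positive constant, of radius at least $\min\bigl(\tilde\varepsilon_0,\Theta(\omega)^{-1/\alpha}\bigr)$. Choosing $\varepsilon_0$ sufficiently small, the bound $\tilde D\geq\xi\tilde\varepsilon_0^{-\alpha}$ used in Corollary \ref{quasicom}(ii) together with $m(C)\geq 1$ and $\theta(\omega)\geq 1$ gives $\Theta(\omega)\geq\tilde\varepsilon_0^{-\alpha}$, so the minimum is realized by $\Theta(\omega)^{-1/\alpha}$ and each ball has Lebesgue volume at least $\gamma_N\Theta(\omega)^{-N/\alpha}$. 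Since $B_i^\omega\subseteq\{h_\omega^i>0\}$ up to a null set, the $B_i^\omega$ are pairwise Lebesgue-essentially disjoint inside $C$, and hence
\[
r\,\gamma_N\,\Theta(\omega)^{-N/\alpha}\leq\sum_{i=1}^r m(B_i^\omega)\leq m(C).
\]
Rearranging gives $r\leq\frac{m(C)}{\gamma_N}\Theta(\omega)^{N/\alpha}$ for $\mathbb{P}$-a.e.\ $\omega$, and taking the essential infimum over $\omega\in\Omega$ yields \eqref{skewbd}.

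The main technical obstacle is the fiberwise disjointness argument: while intuitively clear, it requires using the measurability of $\omega\mapsto h_\omega^i$ (which comes from the explicit averaging construction in Theorem \ref{muer} and the $\mathbb{P}$-continuity of $\omega\mapsto\mathcal{L}_\omega$), combined with a careful Fubini decomposition of the singular sets $A_{ij}$. Once disjointness on almost every fiber is secured, the packing inequality follows directly from Lemma \ref{ph copy(1)} and the strict positivity of the guaranteed balls.
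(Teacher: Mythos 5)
Your proposal is correct and follows essentially the same route as the paper: the lower bound $\Theta(\omega)\geq\frac{\xi}{1-\xi}\tilde{\varepsilon}_{0}^{-\alpha}$ forcing the minimum in Lemma \ref{ph copy(1)} to be $\Theta(\omega)^{-1/\alpha}$, followed by a ball-packing count in $C$ and an essential infimum over $\omega$. The only difference is that you spell out the fiberwise essential disjointness of the supports via mutual singularity and Fubini, a step the paper leaves implicit, and you carry the uniform bound $\|h_{\omega}^{i}\|_{\alpha}\leq\Theta(\omega)$ over to each ergodic density with the same (minor, shared) gloss as the paper.
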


\begin{proof}
For $\mathbb{P}$-a.e. $\omega\in\Omega$, by \eqref{laii}, we know that
$\left\Vert h_{\omega}\right\Vert _{\alpha}\leq\Theta(\omega)$ and $\int
h_{\omega}dm=1$. By Lemma \ref{ph copy(1)}, for $\mathbb{P}$-a.e. $\omega
\in\Omega$, the infimum of $h_{\omega}$ is strictly positive on some ball of
radius
\begin{equation}
\min(\tilde{\varepsilon}_{0},\left(  \frac{1}{\Theta(\omega)}\right)
^{\frac{1}{\alpha}})=\left(  \frac{1}{\Theta(\omega)}\right)  ^{\frac
{1}{\alpha}}. \label{ubgb}%
\end{equation}
This last step is justified as follows. By \eqref{bdonhomega}, we have
\[
\Theta(\omega)=\tilde{D}m(C)(1+\frac{\theta(\omega)}{1-\xi})\text{.}%
\]
By \eqref{wdjqiwh}, we have $\theta(\omega)\geq1$ and $m(C)\geq1$ by
assumption, therefore%
\[
\Theta(\omega)\geq\frac{\tilde{D}}{1-\xi}\text{.}%
\]
In the proof of Corollary \ref{quasicom}, $\tilde{D}$ is chosen such that
$\tilde{D}\geq\xi\tilde{\varepsilon}_{0}^{-\alpha}$, thus%
\[
\Theta(\omega)\geq\frac{\xi}{1-\xi}\tilde{\varepsilon}_{0}^{-\alpha}\text{.}%
\]
Note that $\xi$, introduced in the proof of Theorem \ref{muer}, can be chosen
such that $\xi>\frac{1}{2}$ (since otherwise, we can choose $\xi$ to be
$1-\xi$), thus
\[
\left(  \frac{1}{\Theta(\omega)}\right)  ^{\frac{1}{\alpha}}<\tilde
{\varepsilon}_{0}\text{.}%
\]
Since \eqref{ubgb} is true for $\mathbb{P}$-a.e. $\omega\in\Omega$, it follows
that the number $r$ of ergodic skew product ACIPs is bounded above by the
essential infimum of maximal number of balls of radius $\left(  \frac
{1}{\Theta(\omega)}\right)  ^{\frac{1}{\alpha}}$ contained in $C$, which is
bounded by
\[
\frac{m(C)}{\gamma_{N}}\underset{\omega\in\Omega}{E\inf}\left(  \Theta
(\omega)^{\frac{N}{\alpha}}\right)  .
\]
\end{proof}

\begin{remark}
If the upper bound in \eqref{skewbd} is strictly less than $2$, then we get
uniqueness of the number of ergodic skew product ACIPs. In such a case, one
can use the results given in \cite{Daaa,aiuwo} to investigate quenched limit
theorems in this setting.
\end{remark}

The next example shows how to verify \textbf{(}PE5\textbf{) }once the
partition is finite and the boundaries of the $U_{i}$'s are piecewise smooth
boundaries. This example is motivated from Lemma $2.1$\ in \cite{S} adapted to
our random setting.

\begin{example}
\label{le copy(3)} In Definition \ref{le copy(1)}, suppose that $T$ satisfies
\textbf{(}PE1\textbf{) }through\textbf{ }(PE4\textbf{) }and $\mathcal{I}$ is
finite such that the boundaries of the $U_{\omega,i}$'s are included in
piecewise $C^{1}$ codimension one embedded compact submanifolds. Denote by
\[
Y(\omega):=\underset{x\in%
\mathbb{R}
^{N}}{\sup}\text{ }%
{\displaystyle\sum\limits_{i\in\mathcal{I}}}
\#\{\text{smooth pieces intersecting }\partial U_{\omega,i}\text{ containing
}x\}
\]
and
\begin{equation}
\Lambda(\omega):=s(\omega)^{\alpha}+4Y(\omega)\frac{\gamma_{N-1}}{\gamma_{N}%
}\frac{s(\omega)}{(S-s(\omega))}S^{\alpha}\text{.} \label{icbq}%
\end{equation}
Suppose that there exists $\varrho\mathbb{<}0$ such that $\int_{\Omega}%
\log(\Lambda(\omega))d\mathbb{P(\omega)<}\varrho$, then \textbf{(}PE5\textbf{)
}holds. To see this, fix $\omega\in\Omega$, $i\in\mathcal{I}$, $\varepsilon
\leq\varepsilon_{0}$ and $x\in%
\mathbb{R}
^{N}$. By \textbf{(}PE1\textbf{)}, we have%
\[
T_{\omega,i}^{-1}B_{\varepsilon}(\partial T_{\omega}U_{\omega,i})\cap
B_{(S-s(\omega))\varepsilon_{0}}(x)\subset B_{s(\omega)\varepsilon}(\partial
U_{\omega,i})\cap B_{(S-s(\omega))\varepsilon_{0}}(x)\text{.}%
\]
By the assumption, we have
\[
\partial U_{\omega,i}=%
{\displaystyle\bigcup\limits_{j\in\mathcal{J}_{\omega,i}}}
\Gamma_{\omega,i,j}\text{,}%
\]
where $\mathcal{J}_{\omega,i}$ is a finite indexing set and $\Gamma
_{\omega,i,j}$ is a compact $C^{1}$ embedded submanifold. Therefore, we have
\[
T_{\omega,i}^{-1}B_{\varepsilon}(\partial T_{\omega}U_{\omega,i})\cap
B_{(S-s(\omega))\varepsilon_{0}}(x)\subset%
{\displaystyle\bigcup\limits_{j\in\mathcal{J}_{\omega,i}}}
B_{s(\omega)\varepsilon}(\Gamma_{\omega,i,j})\cap B_{(S-s(\omega
))\varepsilon_{0}}(x)\text{.}%
\]
Arguing as in the proof of Lemma $2.1$ in \cite{S}, for small $\varepsilon$,
we get%
\[
m\Big(B_{s(\omega)\varepsilon}(\Gamma_{\omega,i,j})\cap B_{(S-s(\omega
))\varepsilon_{0}}(x)\Big)\leq2s(\omega)\varepsilon\gamma_{N-1}((S-s(\omega
))\varepsilon_{0})^{N-1}(1+o(1))\text{,}%
\]
which implies%
\begin{equation}
G_{\omega,\varepsilon_{0}}(\varepsilon)\leq2Y(\omega)\frac{\gamma_{N-1}%
}{\gamma_{N}}\frac{s(\omega)\varepsilon}{(S-s(\omega))\varepsilon_{0}%
}(1+o(1))\text{.} \label{hgfrd}%
\end{equation}
Since the number of the $\Gamma_{\omega,i,j}$'s is finite, by taking
$\varepsilon_{0}$ small enough, we get%
\begin{align*}
\int_{\Omega}\log(\zeta_{\varepsilon_{0}}(\omega))d\mathbb{P(\omega)}  &
\mathbb{=}\int_{\Omega}\log(s(\omega)^{\alpha}+2\underset{\varepsilon
\leq\varepsilon_{0}}{\sup}\frac{G_{\omega,\varepsilon_{0}}(\varepsilon
)}{\varepsilon^{\alpha}}(S\varepsilon_{0})^{\alpha})d\mathbb{P(\omega)}\\
&  \mathbb{\leq}\int_{\Omega}\log(s(\omega)^{\alpha}+4Y(\omega)\frac
{\gamma_{N-1}}{\gamma_{N}}\frac{s(\omega)}{(S-s(\omega))}S^{\alpha
})d\mathbb{P(\omega)}\\
&  \mathbb{=}\int_{\Omega}\log(\Lambda(\omega))d\mathbb{P(\omega)<}%
\varrho\text{.}%
\end{align*}
\end{example}

\section*{Acknowledgments}

The authors have been partially supported by the Australian Research Council
(DE160100147). The authors are thankful to Prof. Beno\^{\i}t Saussol for
useful discussions, and to a referee for valuable input. Fawwaz Batayneh
acknowledges the support of the University of Queensland through an Australian
Government Research Training Program Scholarship.


\begin{thebibliography}{99}                                                                                               %
\bibitem {ANV}\newblock R. Aimino, M. Nicol and M. and S. Vaienti,
\newblock Annealed and quenched limit theorems for random expanding dynamical
systems. \newblock {\em Probability Theory and Related Fields}, \textbf{162}
(2015), 233--274.

\bibitem {AS}\newblock V. Araujo and J. Solano, \newblock Absolutely
continuous invariant measures for random non-uniformly expanding maps,
\newblock {\em Mathematische Zeitschrift}, \textbf{3-4} (2014), 1199--1235.

\bibitem {BGT}F. Batayneh and C. Gonz\'{a}lez-Tokman, \newblock On the number
of invariant measures for random expanding maps in higher dimensions.
\newblock {\em Discrete and Continuous Dynamical Systems}, \textbf{41} (2021), 5887--5914.

\bibitem {BL}\newblock M. Blank, \newblock Stochastic properties of
deterministic dynamical systems, \newblock {\em Soviet Scientific Reviews,
Section C: Mathematical Physics Reviews}, \textbf{6} (1987), 243--271.

\bibitem {B}\newblock J. Buzzi, \newblock Absolutely continuous SRB measures
for random Lasota-Yorke maps, \newblock {\em Transactions of the American
Mathematical Society}, \textbf{352.7} (2000), 3289--3303.

\bibitem {B2}\newblock J. Buzzi, \newblock Absolutely continuous invariant
probability measures for arbitrary expanding piecewise $%
\mathbb{R}
$-analytic mappings of the plane. \newblock {\em Ergodic Theory and
Dynamical Systems}, \textbf{20.3} (2000), 697--708.

\bibitem {noorinf}\newblock J. Buzzi, \newblock No or Infinitely Many ACIP for
Piecewise Expanding $C^{r}$ Maps in Higher Dimensions, \newblock {\em
Communications in Mathematical Physics}, \textbf{3.222} (2001), 495--501.

\bibitem {BK}\newblock J. Buzzi and G. Keller, \newblock Zeta functions and
transfer operators for multidimensional piecewise affine and expanding maps,
\newblock {\em Ergodic Theory and Dynamical Systems}, \textbf{21.3} (2001), 689.

\bibitem {C}\newblock W. J. Cowieson, \newblock Stochastic stability for
piecewise expanding maps in $%
\mathbb{R}
^{d}$, \newblock {\em Nonlinearity}, \textbf{13.5} (2000), 1745--1760.

\bibitem {DFCV}\newblock D. Dragi\v{c}evi\'{c}, G. Froyland, C.
Gonz\'{a}lez-Tokman and S. Vaienti, \newblock A spectral approach for quenched
limit theorems for random expanding dynamical systems, \newblock {\em
Communications in Mathematical Physics}, \textbf{360.3} (2018), 1121--1187.

\bibitem {DFCV2}D. Dragi\v{c}evi\'{c}, G. Froyland, C. Gonz\'{a}lez-Tokman and
S. Vaienti, \newblock Almost sure invariance principle for random piecewise
expanding maps, \newblock {\em
Nonlinearity}, \textbf{31.5} (2018), 2252--2280.

\bibitem {Daaa}D. Dragi\v{c}evi\'{c} and J. Sedro, \newblock Quenched limit
theorems for expanding on average cocycles, preprint, arXiv 2105.00548.

\bibitem {aiuwo}D. Dragi\v{c}evi\'{c}, Y. Hafouta and J. Sedro, \newblock A
vector-valued almost sure invariance principle for random expanding on average
cocycles, preprint, arXiv 2108.08714.

\bibitem {FLQ}\newblock G. Froyland, S. Lloyd and A. Quas, \newblock A
semi-invertible Oseledets theorem with applications to transfer operators
cocycles, \newblock {\em Discrete and continuous dynamical systems},
\textbf{33} (2013), 3835-3860.

\bibitem {G}\newblock E.~Giusti,
\newblock {\em Minimal Surfaces and Functions of
Bounded Variation}, \newblock Birkh{\"{a}}user, 1984.

\bibitem {CQ}\newblock C. Gonz\'{a}lez-Tokman and A. Quas, \newblock A
semi-invertible operator Oseledets theorem,
\newblock {\em Ergodic Theory and Dynamical
Systems}, \textbf{34.4} (2014), 1230--1272.

\bibitem {GBMULTI}\newblock P. G\'{o}ra and A. Boyarsky, \newblock Absolutely
continuous invariant measures for piecewise expanding $C^{2}$ transformations
in $%
\mathbb{R}
^{N}$, \newblock {\em Israel Journal of Mathematics}, \textbf{67.3} (1989), 272--286.

\bibitem {GBP}P. G\'{o}ra, A. Boyarsky and H. Proppe, \newblock On the number
of invariant measures for higher-dimensional chaotic transformations,
\newblock {\em Journal of Statistical Physics}, \textbf{62.3-4} (1991), 709--728.

\bibitem {HV}\newblock H. Hu and S. Vaienti, \newblock Absolutely continuous
invariant measures for nonuniformly expanding maps, \newblock {\em Ergodic
Theory and Dynamical Systems} \textbf{29.4} (2009), 1185--1215.

\bibitem {KE1}\newblock G. Keller, \newblock Generalized bounded variation and
applications to piecewise monotonic transformations, \newblock {\em
Zeitschrift fiir Wahrscheinlichkeitstheorie und Verwandte Gebiete},
\textbf{69} (1985), 461--478.

\bibitem {liv}\newblock C. Liverani, \newblock Multidimensional expanding maps
with singularities: a pedestrian approach, \newblock {\em Ergodic
Theory and Dynamical Systems}, \textbf{33.1} (2013), 168--182.

\bibitem {S}\newblock B. Saussol, \newblock Absolutely continuous invariant
measures for multidimensional expanding maps, \newblock {\em Israel Journal
of Mathematics}, \textbf{116.1} (2000), 223--248.

\bibitem {TPS}\newblock M. Tanzi, T. Pereira and S. Strien,
\newblock Robustness of ergodic properties of non-autonomous piecewise
expanding maps, \newblock {\em Ergodic Theory and Dynamical Systems},
\textbf{39.4} (2019), 1121--1152.

\bibitem {Thieullen}\newblock P. Thieullen, \newblock Fibr\'{e}s dynamiques
asymptotiquement compacts. {E}xposants de {L}yapounov. {E}ntropie.
{D}imension, \newblock {\em Ann. Inst. H. Poincar\'e Anal. Non Lin\'eaire},
\textbf{4.1} (1987) 49--97.

\bibitem {Thomine}\newblock D. Thomine, \newblock A spectral gap for transfer
operators of piecewise expanding maps, \newblock {\em Discrete and
Continuous Dynamical Systems}, \textbf{30.3} (2011), 917--944.

\bibitem {TS1}\newblock M. Tsujii, \newblock Absolutely continuous invariant
measures for expanding piecewise linear maps, \newblock {\em Inventiones
mathematicae}, \textbf{143.2} (2001), 349--373.

\bibitem {TS3}\newblock M. Tsujii, \newblock Piecewise expanding maps on the
plane with singular ergodic properties, \newblock {\em Ergodic Theory and
Dynamical Systems}, \textbf{20.6} (2000) 1851--1857.
\end{thebibliography}
\end{document}